\newenvironment{breakablealgorithm}
  {
   \begin{center}
     \refstepcounter{algorithm}
     \hrule height.8pt depth0pt \kern2pt
     \renewcommand{\caption}[2][\relax]{
       {\raggedright\textbf{\ Algorithm:~~\thealgorithm} ##2\par}%
       \ifx\relax##1\relax 
         \addcontentsline{loa}{algorithm}{\protect\numberline{\thealgorithm}##2}%
       \else 
         \addcontentsline{loa}{algorithm}{\protect\numberline{\thealgorithm}##1}%
       \fi
       \kern2pt\hrule\kern2pt
     }
  }{
     \kern2pt\hrule\relax
   \end{center}
  }
\newtheorem{thm}{Theorem}[section]
\theoremstyle{definition}
\newtheorem{definition}[thm]{Definition}
\newtheorem{lemma}[thm]{Lemma}
\theoremstyle{remark}
\newtheorem{remark}[thm]{Remark}
\newtheorem{case}{Case}
\newtheorem{assumption}{Assumption}
\numberwithin{equation}{section} 
\newtheorem{theorem}{Theorem}[section]
\newcommand{\thickhline}{%
    \noalign {\ifnum 0=`}\fi \hrule height 1pt
    \futurelet \reserved@a \@xhline
}
\begin{document}

\title[solving quasimonotone variational inequality problem]{A Two-Step Inertial Method with a New Step-Size Rule for  Variational Inequalities in Hilbert Spaces}

\author[J. Peng, J. Luo, and A. Adamu]{Jian-Wen Peng$^{1}$, Jun-Jie Luo$^{1}$, and Abubakar Adamu$^{1,2,*}$}
\maketitle
\vspace*{-0.6cm}

\begin{center}
{\footnotesize {\it
$^1$ School of Mathematical Sciences, Chongqing Normal University, Chongqing 401331, People’s Republic of China \\ 


$^2$Near East University, TRNC Mersin 10, Nicosia 99138, Turkey.  \\ 
}}\end{center}

\begin{abstract}
In this paper, a two-step inertial Tseng extragradient method involving self-adaptive and Armijo-like step sizes is introduced for solving variational inequalities with a quasimonotone cost function in the setting of a real Hilbert space. Weak convergence of the sequence generated by the proposed algorithm is proved without assuming the Lipschitz condition. An interesting feature of the proposed algorithm is its ability to select the better step size between the self-adaptive and Armijo-like options at each iteration step. Moreover, removing the requirement for the Lipschitz condition on the cost function broadens the applicability of the proposed method. Finally, the algorithm accelerates and complements several existing iterative algorithms for solving variational inequalities in Hilbert spaces.
\end{abstract}

\maketitle

\noindent {\bf Keywords. } Variational inequality, Quasimonotone, Armijo-like step-size, self-adaptive, Weak convergence,  Hilbert spaces.

\renewcommand{\thefootnote}{}
\footnotetext{ $^*$Corresponding author.
\par
E-mail addresses: jwpeng6@yahoo.com.cn (J. Peng), 3589995284@qq.com.cn  (J. Luo), abubakar.adamu@neu.edu.tr(A. Adamu)
\par 
2020 Mathematics Subject Classification: 47H05,49J40,65K10,90C25 
}

\section{Introduction}
Let $H$ be a real Hilbert space with the inner product$\langle\cdot,\cdot\rangle$ and the induce norm$\|\cdot\|$. Then we suppose that  $C$ is nonempty, closed and convex subset of $H$ ($C\subset H$). In this work, we focus on the following problem: 
\begin{equation}\label{In1}
\mbox{Find } ~~ x^*\in C~~ \mbox{  such that } ~~ \langle Ax^*,\:x-x^*\rangle\geq0,\:\forall x\in C.
\end{equation}
Problem (\ref{In1}) is popularly known as the  variational inequality problem (VIP) ($VI(C,A),$ for short). Let $\Omega$ denote  the set of solutions of $VI(A, C)$ (\ref{In1}).  That is, $\Omega:=\{q \in C: \langle A(q),x-q\geq0 \rangle\}$.  The VIP has been used as a modeling tool for the study of various real-life problems such as the obstacle problem, contact problem, traffic network problem, and optimal control problem, and have also been applied in diverse areas of study such as optimization theory, nonlinear analysis, and computational mechanics. Those applications of VI(A, C) (\ref{In1}) mentioned about are discussed in (\cite{aplct 2},\cite{aplct 3},\cite{aplct 4},\cite{aplct 5},\cite{aplct 6}).

 Numerous iterative algorithms for solving  $VI(A, C)$ (\ref{In1}) have been extensively studied and developed by many authors (see, for example \cite{ex1}, \cite{ex2},\cite{ex4},\cite{ex8} and the references therein). The fundamental idea involves extending the projected gradient method, originally designed to solve a constrained minimization problem involving $f$ over some nonempty closed and convex set say,   $C$. The iterative procedure is  given by:
\begin{equation}\label{xnp1=pc}
\begin{cases}
  x_1\in \mathbb{R}^n,\\
  x_{k+1} = P_C \left(x_k - \alpha_k \nabla f(x_k) \right), 
\end{cases} 
\end{equation}
where $P_C$ is the metric projection onto $C$,  ${\alpha_k}$ is a  positive real sequence that  satisfies some certain conditions and $\nabla f$ is the gradient of the smooth function $f$ \cite{projected gradient}. One of the early generalizations of the projected gradient method to the setting of VIP   was the method proposed by Goldstein \cite{1st method}:
\begin{equation}\label{1st method}
\begin{cases}
  x_1\in \mathbb{R}^n\\
  x_{k+1}=P_{C}(x_{k}-\lambda Ax_{k}), 
\end{cases} 
\end{equation}
 where $\lambda$ is a positive real number, $P_C$ is the projection operator and $A$ is a given mapping. But the convergence of this method is only guaranteed under a highly restrictive condition  that the operators are strongly monotone or inverse strongly monotone. (see for\cite{quedian of goldstein} )

 Korpelevich \cite{Korpelevich} improved Goldstein's method by extending the operator  $A$ to monotone and $L$-Lipschitz continuous mapping. He then introduced the famous  extragradient method (EGM) given by:
 \begin{equation}\label{3method EGM}
 \begin{cases}
   x_1 \in \mathbb{R}^n,\\
   y_k = P_C(x_k-\lambda Ax_k), \\
   x_{k+1} = P_C(x_k-\lambda Ay_k),
   \end{cases}
 \end{equation}
 where  $\lambda \in (0,1/L)$. 
\begin{remark} \label{rmk1}
It is well-known that one of the drawbacks of the EGM arises from its requirement to perform two projections onto the closed convex set during each iteration. This requirement can be computationally expensive especially when the structure of $C$ is not simple. Another drawback of this method is the fact that parameter $\lambda$ depends on the explicit value of the Lipschitz constant $L$ which is can be challenging to obtain. 
 \end{remark}
 
 Many authors have tried to address Remark \ref{rmk1} in different directions (see \cite{cite1},\cite{cite2},\cite{cite3},\cite{cite4},\cite{cite5},\cite{cite6},\cite{cite7},\cite{cite8},\cite{cite9}). Censor \cite{censor} introduced a new method called  the subgradient extragradient method (SEGM) which is defined as follows:
 
 \[\left\{ {\begin{array}{*{20}{l}}
{{x_1} \in H,}\\
{{y_k} = {P_C}\left( {{x_k} - \lambda A{x_k}} \right),}\\
{{T_n} = \left\{ {w \in H|\left\langle {{x_k} - \lambda A{x_k} - {y_k},w - {y_k}} \right\rangle  \le 0} \right\},}\\
{{x_{k + 1}} = {P_{{T_{\rm{k}}}}}({x_k} - \lambda A{y_k}),{\rm{ }}k > 1,}
\end{array}} \right.\]

  Another modification of the EGM was  the method introduced by   Tseng \cite{tseng}. His idea was to replace the second projection onto $C$ by a function evaluation. His algorithm is the following:
 \begin{equation}\label{tseng}
 \begin{cases}
 x_1\in H,\\
  y_{k}=P_{C}\left(x_{k}-\lambda Ax_{k}\right), \\
  x_{k+1}=y_{k}-\lambda\left(Ay_{k}-Ax_{k}\right),
  \end{cases}
 \end{equation}
 where $\lambda \in (0, 1/L)$. Tseng \cite{tseng} proved that the sequence generated by \cite{tseng} converges weakly to a point in $\Omega$.   The advantage of the above Tseng’s method is that it requires only  one computation of  projection onto the feasible set $C$ and two evaluations of $A$ per iteration.  
 
 It is worthy of mention that the sequences generated by all the algorithms above may have slow convergence properties. To accelerate the convergence, several authors have adopted the inertial acceleration technique which dates back to the early work of Polyak \cite{polyak} in the setting of convex minimization.   Alvarez and Attouch \cite{iner} adopted this  principle and extended it   to general maximal monotone operators through a proximal point framework. They proposed a new algorithm called inertial proximal point algorithm which is defined as:
 \begin{equation}\label{iner}
 \begin{cases}
 x_0,x_1\in H,\\
   y_k = x_k+\alpha_k(x_k-x_{k-1}), \\
   x_{k+1} = (I+\lambda_{k}A)^{-1}y_{k},
   \end{cases}
 \end{equation}
 where $(I+\lambda_k A)^{-1}$ is the resolvent of the maximal monotone operator $A$ and  $\lambda_k$ is a positive sequence that satisfies some appropriate conditions. Then, the authors proved that the sequence ${x_k}$ generated by(\ref{iner}) converges  weakly  to a zero of $A$ provided $\alpha_n \in [0,1)$ satisfies the following condition:
 \begin{equation}\label{condi of iner alpha}
   \sum_{k=1}^{\infty}\alpha_k\|x_k-x_{k-1}\|^2<+\infty.
 \end{equation}

In the setting of VIP, Thong and Hieu \cite{thonghieu} introduced an inertial step in the   Tseng's method for a better performance  in a real Hilbert space. Their algorithm is defined as:
\begin{equation}\label{Thong}
\begin{cases}
x_0,x_1\in H,\\
  w_k = x_k + \alpha_k(x_k + x_{k+1}),   \\
  y_k = P_C(w_k-\lambda_k w_k),   \\
  x_{k+1}= y_k - \lambda_k(Ay_k-Aw_k),
  \end{cases}
\end{equation}
where  A is monotone and Lipschitz continuous and $\lambda_k$ is a step-size obtained using   Armijo-like step size rule. \\

Some results from using one-step inertial have shown that algorithms with this acceleration technique may fail to outperform their counterpart that does not involve this step.  A counter example was given in \cite{quedian1} which which one-step inertial extrapolation fails to provide acceleration. Polyak mentioned  in \cite{2step youdian} that the use of inertial of more than two points $x_k,x_{k-1}$ could provide acceleration. Polyak \cite{2step youdian} also discussed that the multi-step inertial methods can boost the speed of optimization methods though neither the convergence nor the rate of such multi-step inertial methods was established in \cite{2step youdian}. Recently, several authors have explore the concept of two-step inertia to accelerate convergence (see for example, \cite{bang1}, \cite{okeke}, \cite{citetwo3}).

\begin{remark}
It is important to note that all the improvements of the EGM and their accelerated versions mentioned above have not fully addressed Remark \ref{rmk1}. They are yet to dispense with the dependency of the step-size on the explicit value of the Lipschitz constant.
\end{remark}  
In recent years, different rules of selecting the step-size have been discussed since the arising of stochastic approximation methods. Liu and Yang \cite{liuyang} introduced a new self-adaptive method for solving variational inequalities with Lipschitz continuous and quasimonotone mapping (or Lipschitz continuous mapping without monotonicity) in real Hilbert space. The method is defined as:
\[\left\{ {\begin{array}{*{20}{l}}
{{x_1} \in H,}\\
{{y_k} = {P_C}\left( {{x_k} - {\lambda _k}A{x_k}} \right),}\\
{{\lambda _{k + 1}} = \left\{ {\begin{array}{*{20}{l}}
{\min \left\{ {\frac{{\mu \left\| {{x_k} - {y_k}} \right\|}}{{\left\| {A{x_k} - A{y_k}} \right\|}},{\lambda _k} } \right\}}&{,{\rm{if   }}A{x_k} - A{y_k} \ne 0,}\\
{{\lambda _k} ,}&{otherwise,}
\end{array}} \right.}\\
{{x_{k + 1}} = {y_k} - {\lambda _k}\left( {A{y_k} - A{x_k}} \right).}
\end{array}} \right.\]
They introduced the adaptive step-size to modify the gradient method and get the weak convergence without knowing the Lipschitz constant. And another popular step-size is Armijo like step-size, which is a fundamental step size selection technique in optimization algorithms. It ensures sufficient decrease in the objective function while balancing computational efficiency.

Recently Mewomo {et al. } \cite{two 2024} used the two-step inertial acceleration strategy and the adaptive step-size to introduce a two-step inertial Tseng method involving quasimonotone and  uniformly continuous operator. Their algorithm is given by:

\begin{breakablealgorithm}
\renewcommand{\thealgorithm}{1}
\caption{}\label{algo:myalg}
\begin{algorithmic}
\\ \hspace*{0.02in} {\bf Initialization:} \emph{Given} $\gamma>0, l \in(0,1), \mu \in(0,1)$. \emph{Let} $ x_{-1},x_{0},x_{1} \in \mathcal{H}$ 
\emph{be arbitrary points and given} $x_{k-2},x_{k-1},x_{k}$  \\    
\hspace*{0.02in} {\bf Iterative: }Calculate $x_{k+1}$ as follows \\
\hspace*{0.02in} {\bf Step 1: }Set $w_{k}=x_{k}+\alpha(x_{k}-x_{k-1})+\beta(x_{k-1}-x_{k-2})$ and compute
$$
y_{k}=P_{C}\left(w_{k}-\lambda_{k} A w_{k}\right),
$$
\\
\emph{where} $\lambda_{k} = \gamma l^{m_{k}}$ \emph{and} $m_{k}$ \emph{is the smallest nonnegative integer m such that}
\begin{equation}\label{t2}
  \lambda_{k}\left\|A w_{k}-A y_{k}\right\| \leq \mu\left\|w_{k}-y_{k}\right\|.
\end{equation}
\emph{If} $y_{k} = w_{k}$ \emph{then stop:} $y_{k}$ \emph{is a solution of the problem (VIP). Otherwise,}\\
\hspace*{0.02in} {\bf Step 2:} Compute
$$
x_{k+1}=y_{k}-\lambda_{k}(Ay_{k}-w_{k}).
$$
\\
\emph{Set} $k:= k+1$ \emph{and go to \textbf{Step 1}.}
\end{algorithmic}
\end{breakablealgorithm}
They proved weak convergence of the sequence generated by their proposed algorithm. \\

From different angle,  Peng et al.  \cite{chou} used the Armijo-like condition to  proposed a modified Tseng method for solving pseudo-monotone variational inequality problems and the fixed point problems of the demi-contractive mappings. Their  algorithm is defined as:

\begin{breakablealgorithm}
\renewcommand{\thealgorithm}{2}
\caption{}\label{2}
\begin{algorithmic}
\\ \hspace*{0.02in} {\bf Initialization:} \emph{Given} $k=1,\rho>0,\lambda_0>0,\gamma>0, l \in(0,1), \mu \in(0,1)$. \emph{Let} $x_{0},x_{1} \in \mathcal{H}$ \\
\emph{be arbitrary points}    \\    
\hspace*{0.02in} {\bf Step 1:} Given the current iterate $x_{k-1},x_{k}$

$$w_{k}=x_{k}+\alpha_k(x_{k}-x_{k-1}),$$
where
$${\alpha _k} = \left\{ {\begin{array}{*{20}{c}}
{\min \left\{ {\frac{{{\xi _k}}}{{\left\| {{x_k} - {x_{k - 1}}} \right\|}},\rho } \right\},}&{{\rm{if  }}{x_k} \ne {x_{k - 1}}},\\
\rho, &{{\rm{otherwise.}}}
\end{array}} \right.$$

\hspace*{0.02in} {\bf Step 2:} Compute
$$
y_{k}=P_{C}\left(w_{k}-\lambda_{k} A w_{k}\right),
$$
\\
\emph{where} $\lambda_{k}=min(\lambda_{k}^{(1)},\lambda_{k}^{(2)})$,\\
\begin{equation}\label{t1}
\lambda_{k}^{(1)}=\begin{cases}\min \left\{\frac{\mu\left(\left\|w_{k}-y_{k}\right\|\right)}
{A\left(w_{k}\right)-A\left(y_{k}\right)}, \lambda_{k-1}\right\}, & \text { if } A\left(w_{k}\right)-A\left(y_{k}\right)\neq0, \\ \lambda_{k-1}, & \text { otherwise }.\end{cases}
\end{equation}
\emph{where} $\lambda_{k}^{(2)} = \gamma l^{m_{k}}$ \emph{and} $m_{k}$ \emph{is the smallest nonnegative integer m such that}
\begin{eqnarray}\label{t2}
  \lambda_{k}\left\|A w_{k}-A y_{k}\right\| &\leq& \mu\left\|w_{k}-y_{k}\right\|,\\
  q_k & = & (1-\beta_k)z_k+\beta_kUz_k,
\end{eqnarray}
where $z_k = y_k +\lambda_k (Aw_k-Ay_k)$.\\
    If $w_k=y_k=q_k$, then stop $w_k$ is a solution of $w_k \in VI(C,A)\cap Fix(U)$. Otherwise\\
\hspace*{0.02in} {\bf Step 3:} Compute
$$
x_{k+1}=\eta_k D_k +(1-\eta_k)q_k,
$$
\\
\emph{where} $D_k = (1-\theta_k)f(x_k)+\theta_kf(x_{k-1}),$\emph{ Set }$k:=k+1$ \emph{and go to \textbf{Step 1}.}
\end{algorithmic}
\end{breakablealgorithm}

 In the step2 of Algorithm 2, Peng et al choose the step-size as the minimum of the adaptive step-size and the   Armijo  rule step size. In this paper,  inspired by the work of \cite{chou} and \cite{two 2024}, we have the following contributions: 

\begin{itemize}
\item We propose a novel algorithm that incorporates a two-step inertial technique to accelerate convergence. Unlike traditional one-step inertial methods, our approach leverages historical information from two preceding iterations, enhancing the algorithm's momentum and convergence speed.
\item Our work extends the applicability of Tseng’s extragradient method to quasimonotone VIPs, a broader and more general class of problems compared to the monotone or strongly monotone cases typically studied. This extension is significant because quasimonotonicity covers a wider range of practical problems while requiring weaker assumptions for convergence.
\end{itemize}



%
%
%
%

\section{Preliminaries} \label{section2}

%
%
%

In this section, we review the definitions and lemmas required for this article.
\begin{definition}\label{def}
  Let $H$ be a real Hilbert space. An operator $A: H \rightarrow H$ is said to be:\\
(i) \emph{Lipschitz continuous} on $H$, if there exist a constant $L > 0$ such that
    \begin{equation}
      ||Ax-Ay||\leq L||x-y||, \forall x,y \in H.\notag
    \end{equation}\\
(ii) uniformly continuous, if for every $\epsilon > 0$, there exists $\delta = \delta(\epsilon)> 0$, such that 
    \begin{equation}
      ||Ax-Ay||< \epsilon ,\text{ whenever} ~~||x-y||< \delta,\text{ } \forall x,y \in H; \notag
    \end{equation}\\
(iii) sequentially weakly-strongly continuous, if for each sequence ${x_k}$, we have $x_k\rightharpoonup x \in H $  implies that $ Ax_k \rightarrow Ax \in H$; \notag\\
(iv) sequentially weakly continuous, if for each sequence ${x_n}$,we have $x_n\rightharpoonup x \in H $  implies that $ Ax_k \rightarrow Ax \in H$ ;\notag\\
(v) monotone, if $\langle Ax-Ay,x-y \rangle \geq 0, \forall x,y \in H ;$\notag\\
(vi) $\alpha$-strongly pseudomonotone, if there exists $\alpha > 0$ such that 
    \begin{equation}
      \langle Ax,y-x \rangle \geq 0 \Rightarrow \langle Ay,y-x \rangle \geq \alpha||x-y||^2,\text{ } \forall x,y \in H;\notag
    \end{equation}\\
(vii) pseudomonotone, if
    \begin{equation}
      \langle Ax,y-x \rangle \geq 0 \Rightarrow \langle Ay,y-x \rangle \geq 0,\text{ } \forall x,y \in H;\notag
    \end{equation}\\
(viii) quasimonotone, if
    \begin{equation}
      \langle Ax,y-x \rangle > 0 \Rightarrow \langle Ay,y-x \rangle \geq 0,\text{ } \forall x,y \in H;\notag
    \end{equation}\\
\end{definition}
From the definition above, the following implications hold: $( v)$ $\Longrightarrow$ $( vii)$ $\Longrightarrow$ $(viii)$ but the converse is not true in general. 

Let $S_D$ be the solution set of the dual formulation of the VIP (1.1) defined as: find $x^*\in C$ such that

$$\langle Az,\:z-x^{*}\rangle\geq0,\:\forall z\in C.$$

Then, $S_D$ is a closed and convex subset of $C$, and since $A$ is continuous and $C$ is
convex, we have that $S_D\subset S.$
We have the following result on the solution set of the dual VIP .

\begin{lemma}\label{p1}
  Let $x,y,z\in H$ and $a,b\in\mathbb{R}$. Then
  \begin{equation}
    \begin{split}
       ||(1+a)x-(a-b)y-bz||^2 =&(1+a)||x||^2-(a-b)||y||^2-b||z||^2\\\notag
                               & +(1+a)(a-b)||x-y||\\\notag
                               & +b(1+a)||x-z||^2-b(a-b)||y-z||^2 .\\\notag
    \end{split}
  \end{equation}
\end{lemma}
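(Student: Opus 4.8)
The plan is to prove the identity by direct expansion of both sides via the bilinearity of the inner product and then matching the coefficients of the six independent scalar quantities $\|x\|^2$, $\|y\|^2$, $\|z\|^2$, $\langle x,y\rangle$, $\langle x,z\rangle$, and $\langle y,z\rangle$.

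First I would expand the left-hand side. Setting $u=(1+a)x-(a-b)y-bz$ and using $\|u\|^2=\langle u,u\rangle$ together with bilinearity yields
\[
\|u\|^2=(1+a)^2\|x\|^2+(a-b)^2\|y\|^2+b^2\|z\|^2-2(1+a)(a-b)\langle x,y\rangle-2(1+a)b\langle x,z\rangle+2(a-b)b\langle y,z\rangle.
\]
Next I would expand each difference term on the right-hand side using $\|p-q\|^2=\|p\|^2-2\langle p,q\rangle+\|q\|^2$ and collect the coefficient of each of the six quantities. For instance, the coefficient of $\|x\|^2$ becomes $(1+a)+(1+a)(a-b)+b(1+a)=(1+a)\bigl[1+(a-b)+b\bigr]=(1+a)^2$, which matches the left-hand side; the coefficient of $\langle x,y\rangle$ is $-2(1+a)(a-b)$ on both sides, and the remaining four checks are analogous.

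The structural reason everything reconciles is that the three coefficients $1+a$, $-(a-b)$, and $-b$ sum to $1$, so the statement is the three-term instance of the general affine-combination identity $\bigl\|\sum_i\lambda_i v_i\bigr\|^2=\sum_i\lambda_i\|v_i\|^2-\sum_{i<j}\lambda_i\lambda_j\|v_i-v_j\|^2$, valid whenever $\sum_i\lambda_i=1$. I would keep this as a guiding sanity check, since it both explains the form of the right-hand side and predicts each coefficient in advance.

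There is no genuine conceptual obstacle here; the only care required is the bookkeeping of signs and products, most notably tracking that the cross-term contributions arising from expanding the three squared differences on the right-hand side reproduce exactly the $-2\lambda_i\lambda_j$ cross terms produced by expanding the left-hand side. I would therefore present the argument as a short coefficient-matching computation rather than a lengthy term-by-term grind.
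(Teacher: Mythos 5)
Your proposal is correct. The paper states this lemma in the Preliminaries without proof, treating it as a known identity, so there is no in-paper argument to compare against; your direct expansion and coefficient matching is the standard way to verify it, and your observation that the three coefficients $1+a$, $-(a-b)$, $-b$ sum to $1$ (so the statement is the three-term case of $\bigl\|\sum_i\lambda_i v_i\bigr\|^2=\sum_i\lambda_i\|v_i\|^2-\sum_{i<j}\lambda_i\lambda_j\|v_i-v_j\|^2$) correctly predicts every term, including the fact that the factors $(1+a)(a-b)$ and $b(1+a)$ in the statement should multiply $\|x-y\|^2$ and $\|x-z\|^2$ respectively (the missing exponents in the displayed statement are typographical).
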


\begin{lemma}\label{p2}
  Let $C$ be a nonempty closed subset of a real Hilbert space $H$. $\forall v\in H$ and $\forall z\in C$, we have
  \begin{equation}
    z = P_{C}v \Leftrightarrow \langle v-z,z-y\rangle\geq0, \forall y \in C.\notag
  \end{equation}
\end{lemma}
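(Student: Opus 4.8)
The plan is to prove the two implications of the equivalence separately, using only the variational characterization of $P_{C}v$ as the unique minimizer of $w \mapsto \|v - w\|$ over $C$, together with the convexity of $C$. Both halves are short computations built on expanding a squared norm via the inner product, so the work is essentially bookkeeping once the right test points are chosen.

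For the forward implication ($\Rightarrow$), I would fix $z = P_{C}v$ and test against the convex combinations $w_{t} := z + t(y - z) = (1-t)z + ty$, which lie in $C$ for every $y \in C$ and every $t \in [0,1]$ by convexity. Minimality of $z$ gives $\|v - z\|^{2} \leq \|v - w_{t}\|^{2}$, and expanding the right-hand side yields
$$0 \leq -2t\,\langle v - z,\, y - z\rangle + t^{2}\|y - z\|^{2}.$$
Dividing by $t > 0$ and letting $t \to 0^{+}$ forces $\langle v - z,\, y - z\rangle \leq 0$, which is exactly $\langle v - z,\, z - y\rangle \geq 0$; since $y \in C$ is arbitrary, this proves the inequality.

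For the reverse implication ($\Leftarrow$), I would show directly that the inequality already makes $z$ the minimizer. Assuming $\langle v - z,\, z - y\rangle \geq 0$ for all $y \in C$, I would expand, for arbitrary $y \in C$,
$$\|v - y\|^{2} = \|v - z\|^{2} + 2\,\langle v - z,\, z - y\rangle + \|z - y\|^{2} \geq \|v - z\|^{2},$$
since both the cross term (by hypothesis) and $\|z - y\|^{2}$ are nonnegative. Thus $z$ minimizes $\|v - \cdot\|$ over $C$, i.e.\ $z = P_{C}v$.

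This is the classical obtuse-angle characterization of the projection, so I do not expect a substantive obstacle. The one point genuinely needing care is that the argument, namely both the segment construction in the forward direction and the very well-definedness (existence and uniqueness) of $P_{C}v$, requires $C$ to be \emph{convex}, not merely closed as the statement literally reads. I would therefore read $C$ as the nonempty closed convex set fixed in the introduction, and, if desired, recall that in a Hilbert space it is precisely closedness together with convexity that guarantees $P_{C}$ exists and is single-valued.
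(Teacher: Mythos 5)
Your proof is correct and is the standard argument for this classical characterization of the metric projection; the paper states this lemma in its preliminaries without proof, so there is no alternative argument to compare against. Your observation about the hypothesis is also well taken: the statement as written says only ``closed subset,'' but both the existence and single-valuedness of $P_C$ and the segment construction $w_t=(1-t)z+ty$ in your forward direction require $C$ to be convex, which is how $C$ is fixed throughout the paper.
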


\begin{lemma}\label{u-v}
  The following identities hold for all $u,v\in H$ :
    \begin{equation}
      2\langle u,v \rangle =\|u\|^2+\|v\|^2-\|u-v\|^2=\|u+v\|^2-\|u\|^2-\|v\|^2.\notag
    \end{equation}
\end{lemma}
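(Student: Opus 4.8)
The plan is to prove both identities by direct expansion of the squared norms, using only the defining relation $\|x\|^2 = \langle x, x\rangle$ together with the bilinearity and symmetry of the inner product on $H$. Since the inner product is symmetric, $\langle u, v\rangle = \langle v, u\rangle$, so expanding the squared norm of a sum or a difference produces a single cross term equal to $2\langle u, v\rangle$. The whole argument is therefore a short computation rather than a structural proof.

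First I would establish the rightmost equality. Expanding $\|u+v\|^2 = \langle u+v,\, u+v\rangle$ by bilinearity yields $\langle u,u\rangle + \langle u,v\rangle + \langle v,u\rangle + \langle v,v\rangle$, which by symmetry equals $\|u\|^2 + 2\langle u,v\rangle + \|v\|^2$. Isolating the cross term gives $2\langle u,v\rangle = \|u+v\|^2 - \|u\|^2 - \|v\|^2$, which is the second stated identity.

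Next I would carry out the analogous expansion for the difference: $\|u-v\|^2 = \langle u-v,\, u-v\rangle = \|u\|^2 - 2\langle u,v\rangle + \|v\|^2$. Solving for the cross term then gives $2\langle u,v\rangle = \|u\|^2 + \|v\|^2 - \|u-v\|^2$, the first stated identity. Combining the two computations shows that both expressions equal $2\langle u,v\rangle$, establishing the chain of equalities.

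I do not anticipate any genuine obstacle here: the result is the classical polarization identity, and the only point requiring (minimal) care is invoking symmetry of the inner product to merge $\langle u,v\rangle$ and $\langle v,u\rangle$ into a single factor of $2$ in each expansion.
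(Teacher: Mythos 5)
Your proof is correct and is the standard direct expansion of $\|u\pm v\|^2$ via bilinearity and symmetry of the inner product; the paper states this lemma without proof, so there is nothing to diverge from, and your computation is exactly the expected argument.
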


\begin{lemma}
  (\cite{lema60}) Suppose either\\
  (a) $A$ is pseudomonotone on $C$ and $S\neq\emptyset$;\\
  (b) $A$ is the gradient of $G$, where $G$ is a differential quasiconvex function on an
 open set $K\supset C$ and attains its global minimum on $C$;\\
  (c) $A$ is quasimonotone on $C$, $A\neq 0 $ on $C$ and $C$ is bounded;\\
  (d) $A$ is quasimonotone on C, $A\neq 0 $ on C and there exists a positive number $r$ such that, for every $x\in C$ with $||x|| \geq r$, there exists $y \in C$ such that $||y|| \leq r$ and $\langle Ax,y-x\rangle \leq 0$;\\
  (e) $A$ is quasimonotone on $C$, $int C \neq \emptyset$ and there exists $x^* \in S$ such that $Ax^*\neq 0$,
\end{lemma}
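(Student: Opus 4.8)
The statement as displayed omits its conclusion; from the surrounding discussion the intended assertion is that under any one of the hypotheses (a)--(e) the dual solution set is nonempty, i.e. $S_D \neq \emptyset$. Since the result is attributed to \cite{lema60}, the cleanest route is simply to invoke that reference, and what follows is how I would reconstruct the argument. The starting observation is that $S_D \subseteq S$ always holds (already recorded above from continuity of $A$ and convexity of $C$), so the real content is the reverse direction: exhibiting a point that solves the \emph{Minty} (dual) inequality $\langle Az,\,z-x^*\rangle \ge 0$ for all $z\in C$. I would split the five cases into two families: the ``order-type'' cases (a) and (b), where a dual solution can be produced explicitly, and the genuinely quasimonotone cases (c)--(e), which require a topological existence theorem.

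For (a), I would use pseudomonotonicity directly. Taking $x^*\in S$ (nonempty by hypothesis), the Stampacchia inequality $\langle Ax^*,\,z-x^*\rangle \ge 0$ combined with the pseudomonotone implication yields $\langle Az,\,z-x^*\rangle \ge 0$ for every $z\in C$, so $x^*\in S_D$; thus $S\subseteq S_D$ and the claim follows. For (b), I would let $x^*$ be the global minimizer of $G$ on $C$; first-order optimality gives $x^*\in S$, and the differentiable quasiconvexity of $G$ (which states $G(a)\le G(b)\Rightarrow \langle\nabla G(b),\,a-b\rangle\le 0$) applied with $a=x^*,\,b=z$ turns the inequality $G(x^*)\le G(z)$ into $\langle\nabla G(z),\,z-x^*\rangle\ge 0$, i.e. $x^*\in S_D$. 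Both cases are essentially immediate.

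For the quasimonotone cases I would begin with (c). Here $C$ is bounded, closed and convex, hence weakly compact, and $A$ is continuous, so the classical Hartman--Stampacchia argument (a Brouwer/Ky Fan fixed point applied to the map $z\mapsto P_C(z-Az)$) produces a Stampacchia solution $x^*\in S$. The real work is upgrading $x^*$ to a Minty solution: given $z\in C$, if $\langle Ax^*,\,z-x^*\rangle>0$ then quasimonotonicity fires immediately and gives $\langle Az,\,z-x^*\rangle\ge 0$. The delicate point, and the main obstacle throughout, is the strict inequality built into the quasimonotone definition: the borderline case $\langle Ax^*,\,z-x^*\rangle=0$ is exactly where the hypothesis $A\neq 0$ on $C$ must be consumed, via a perturbation/limiting argument that approximates $z$ by directions along which strict positivity holds and then passes to the limit using continuity of $A$.

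Finally, for (d) and (e) I would reduce to the bounded case. In (d) the coercivity-type condition (for $\|x\|\ge r$ there is a $y$ with $\|y\|\le r$ and $\langle Ax,\,y-x\rangle\le 0$) lets me confine the search to the weakly compact set $C\cap\{\|x\|\le r\}$, solve the dual VI there by case (c), and then verify that the point produced solves the dual VI on all of $C$. In (e) I am handed a Stampacchia solution $x^*$ with $Ax^*\neq 0$, and I would use $\mathrm{int}\,C\neq\emptyset$ to guarantee the existence of feasible directions along which $\langle Ax^*,\cdot\rangle$ is strictly positive, so that the borderline vanishing case can again be resolved by approximation from the interior. In every quasimonotone case the crux is the same, namely converting ``$A\neq 0$ plus quasimonotonicity'' into the required strict-inequality/intersection property, so I expect the overwhelming majority of the effort to lie in (c)--(e) and almost none in (a)--(b).
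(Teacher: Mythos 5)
The paper does not prove this statement at all: it is imported verbatim from \cite{lema60}, and the conclusion ``then $S_D$ is nonempty'' is present in the source (accidentally typeset just after the \verb|\end{lemma}|). So your primary move --- identify the missing conclusion as $S_D\neq\emptyset$ and invoke the reference --- is exactly what the paper does, and your reconstructions of (a) and (b) are correct and complete: in (a) the pseudomonotone implication applied at a Stampacchia solution $x^*$ gives $\langle Az,z-x^*\rangle\ge 0$ directly, and in (b) differentiable quasiconvexity applied to $G(x^*)\le G(z)$ gives the same. Your treatment of (e) is also essentially the standard one: since $\operatorname{int}C\neq\emptyset$ and $Ax^*\neq 0$, the set $C$ cannot lie in $x^*+(Ax^*)^{\perp}$, so some $w_0\in C$ satisfies $\langle Ax^*,w_0-x^*\rangle>0$, and sliding any borderline $z$ toward $w_0$ lets quasimonotonicity fire before passing to the limit by continuity.

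The gap is in your outline for (c) and (d). The strategy ``produce a Stampacchia solution $x^*$ by Hartman--Stampacchia, then upgrade it to a Minty solution using quasimonotonicity plus a perturbation toward directions of strict positivity'' does not close under the hypotheses of (c): nothing prevents $C$ from being entirely contained in the hyperplane $x^*+(Ax^*)^{\perp}$ (e.g.\ $C$ a segment with $Ax^*$ orthogonal to it), in which case $\langle Ax^*,z-x^*\rangle=0$ for \emph{every} $z\in C$, the quasimonotone implication never applies, and there are no nearby points of strict positivity to approximate from --- the hypothesis $A\neq 0$ on $C$ alone does not rescue this. The proofs in the literature (Ye--He's Proposition, going back to Aussel--Hadjisavvas) do not pass through a Stampacchia solution at all; they establish that a quasimonotone map with $A\neq 0$ on $C$ is \emph{properly} quasimonotone there and then apply the KKM/Ky Fan intersection lemma directly to the weakly closed sets $F(z)=\{x\in C:\langle Az,z-x\rangle\ge 0\}$ over the weakly compact $C$, obtaining a Minty solution in one step; case (d) then follows by localizing to $C\cap\{\|x\|\le r\}$ as you indicate. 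Since the result is quoted, none of this affects the paper, but as a proof outline your cases (c)--(d) would need to be replaced by the proper-quasimonotonicity/KKM route.
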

then $S_D$ is nonempty.

\begin{lemma}
  (\cite{lema16}) Let $H$ be a Hilbert space and $A : H \rightarrow H$ be a uniformly continuous operator. Suppose $x\in H$ and $\psi \geq \sigma > 0$. The following inequality
 holds:
 $$
 \frac{\|x-P_C(x-\psi Ax)\|}{\psi}\leq\frac{\|x-P_C(x-\sigma Ax)\|}{\sigma} .
 $$

\end{lemma}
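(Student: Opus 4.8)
The plan is to treat $v := Ax$ as a single fixed vector (in fact the uniform-continuity hypothesis plays no role in this inequality: all that is needed is that $Ax$ is one fixed element of $H$) and to study the projection residual $r(t) := x - P_C(x - tv)$ as a function of the scalar $t>0$. Writing $s := \sigma$, $t := \psi$, $r_s := x - P_C(x - sv)$ and $r_t := x - P_C(x - tv)$, the claim $\tfrac{\|r_t\|}{t} \le \tfrac{\|r_s\|}{s}$ is equivalent to $s\|r_t\| \le t\|r_s\|$. First I would record the variational characterization of the two projections through Lemma \ref{p2}: testing the inequality for $P_C(x-sv)$ at the point $P_C(x-tv)\in C$, and the inequality for $P_C(x-tv)$ at $P_C(x-sv)\in C$, and simplifying using $(x-sv)-P_C(x-sv) = r_s - sv$ etc., yields
\[
\langle r_s - sv,\; r_s - r_t\rangle \le 0, \qquad \langle r_t - tv,\; r_t - r_s\rangle \le 0 .
\]

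Next I would extract an auxiliary monotonicity fact. Setting $\phi := \langle v,\, r_s - r_t\rangle$, the two inequalities read $\langle r_s, r_s - r_t\rangle \le s\phi$ and $\langle r_t, r_s - r_t\rangle \ge t\phi$; subtracting them gives $\|r_s - r_t\|^2 \le (s-t)\phi$, and since $s \le t$ this forces $\phi \le 0$. Feeding $\phi \le 0$ back into the first inequality gives $\|r_s\|^2 \le \langle r_s, r_t\rangle \le \|r_s\|\|r_t\|$, hence the monotonicity $\|r_s\| \le \|r_t\|$. This sign, $\|r_s\| - \|r_t\| \le 0$, is exactly what the final step needs.

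The crux — and the only genuinely non-routine step — is eliminating the unknown vector $v$. I would multiply the first displayed inequality by $t$ and the second by $s$ and add; the cross terms $st\,v$ cancel exactly, leaving the clean relation
\[
\langle t\,r_s - s\,r_t,\; r_s - r_t\rangle \le 0 .
\]
Expanding and applying Cauchy--Schwarz to $\langle r_s, r_t\rangle$ gives $t\|r_s\|^2 + s\|r_t\|^2 \le (t+s)\|r_s\|\|r_t\|$, which factors as $(\|r_s\| - \|r_t\|)(t\|r_s\| - s\|r_t\|) \le 0$. Because the auxiliary step established $\|r_s\| - \|r_t\| \le 0$, the second factor must be nonnegative, i.e. $t\|r_s\| \ge s\|r_t\|$, which is precisely the asserted inequality. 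The main obstacle is entirely the algebraic insight of the $t$/$s$ weighting that makes the $v$-terms cancel; everything else is bookkeeping with Cauchy--Schwarz and the sign supplied by the monotonicity of $t \mapsto \|r(t)\|$.
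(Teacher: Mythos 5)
Your proof is correct, and it is worth noting at the outset that the paper itself offers no proof of this lemma: it is quoted verbatim from the cited reference (Denisov, Semenov and Chabak), so there is no in-paper argument to compare against. Your derivation is the standard self-contained one: the two variational inequalities $\langle r_s - \sigma v, r_s - r_t\rangle \le 0$ and $\langle r_t - \psi v, r_t - r_s\rangle \le 0$ follow correctly from Lemma \ref{p2} applied at the two projected points, the subtraction argument giving $\|r_s - r_t\|^2 \le (\sigma - \psi)\langle v, r_s - r_t\rangle$ correctly yields both $\langle v, r_s-r_t\rangle \le 0$ and the monotonicity $\|r_s\| \le \|r_t\|$, and the $\psi$-versus-$\sigma$ weighting that cancels the $v$-terms is exactly the right trick; the factorization $(\|r_s\|-\|r_t\|)(\psi\|r_s\|-\sigma\|r_t\|)\le 0$ checks out. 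Your observation that uniform continuity of $A$ is never used (only the single vector $Ax$ enters) is also accurate. The one loose stitch is the final inference: when $\|r_s\| = \|r_t\|$ the first factor vanishes and the sign of the product forces nothing about the second factor; you should add the one-line remark that in that case $\psi\|r_s\| - \sigma\|r_t\| = (\psi-\sigma)\|r_s\| \ge 0$ holds directly from $\psi \ge \sigma$. With that trivial patch the argument is complete.
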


\section{PROPOSE THE ALGORITHM}

The subsequent section outlines the presentation of the algorithm. To ensure the weak convergence properties of the algorithm are valid, we propose the following assumptions.

\begin{assumption}
~\\
          (i)  $S_{D}\neq \emptyset$; \\
         (ii)  $A$ is Lipschitz continuous on $H$; \\
        (iii)  $A$ satisfies the following condition: 
               whenever ${x_{k}}\subset C$ and $x_{k}\rightharpoonup v^{*}$, 
              one has $\left\| {A{v^*}} \right\| \le \lim \mathop {\inf }\limits_{n \to \infty } \left\| {A{x_k}} \right\|$; \\
         (iv)  $A$ is quasimonotone on $H$;  \\
          (v)  The set $\{v\in C: Av = 0\} \backslash S_{D}$ is a finite set.
\end{assumption}

\begin{assumption}
  Assume that $\alpha$, $\beta$ and $\mu$ meet the following conditions:\\
  (a) $0\leq \alpha \leq \frac{1-\mu}{3+\mu}$;\\
  (b) $max\{2\alpha (\frac{1-\mu}{3+\mu})-(1-\alpha),\frac{1}{2}[\alpha(1+\mu)-(\frac{(1-\mu)(1-\alpha)^2}{1+\alpha})]\}<\beta\leq0$;\\
  (c) $2\alpha^2\mu-(1-3\alpha)+\mu(1-\alpha)-\beta(4\alpha+3-\mu)+2\mu\beta^2<0$.\\
\end{assumption}

Under the assumptions above, we propose an algorithm for solving VIP (\ref{In1}):

\begin{breakablealgorithm}
\renewcommand{\thealgorithm}{3}
\caption{}\label{algo:myalg}
\begin{algorithmic}
\\ \hspace*{0.02in} {\bf Initialization:} \emph{Given} $\gamma>0, l \in(0,1), \mu \in(0,1)$. \emph{Let} $ x_{-1},x_{0},x_{1} \in \mathcal{H}$ 
\emph{be arbitrary points and given} $x_{k-2},x_{k-1},x_{k}$    \\    
\hspace*{0.02in} {\bf Iterative: }Calculate $x_{k+1}$ as follows \\
\hspace*{0.02in} {\bf Step 1: }Set $w_{k}=x_{k}+\alpha(x_{k}-x_{k-1})+\beta(x_{k-1}-x_{k-2})$ and compute
$$
y_{k}=P_{C}\left(w_{k}-\lambda_{k} A w_{k}\right),
$$
\\
\emph{where} $\lambda_{k}=min(\lambda_{k}^{(1)},\lambda_{k}^{(2)})$\\
\begin{equation}\label{t1}
\lambda_{k}^{(1)}=\begin{cases}\min \left\{\frac{\mu\left(\left\|w_{k}-y_{k}\right\|\right)}
{||Aw_{k}-Ay_{k}||}, \lambda_{k-1}\right\}, & \text { if } Aw_{k}-Ay_{k}\neq0, \\ \lambda_{k-1}, & \text { otherwise }.\end{cases}
\end{equation}
$\lambda_{k}^{(2)} = \gamma l^{m_{k}}$ \emph{and} $m_{k}$ \emph{is the smallest nonnegative integer m such that}
\begin{equation}\label{t2}
  \lambda_{k}^{(2)}\left\|A w_{k}-A y_{k}\right\| \leq \mu\left\|w_{k}-y_{k}\right\|.
\end{equation}
\emph{If} $y_{k} = w_{k}$ \emph{then stop:} $y_{k}$ \emph{is a solution of the problem (VIP). Otherwise,}\\
\hspace*{0.02in} {\bf Step 2:} Compute
$$
x_{k+1}=y_{k}-\lambda_{k}(Ay_{k}-Aw_{k}),
$$
\\
\emph{Set} $k:= k+1$ \emph{and go to \textbf{Step 1}.}
\end{algorithmic}
\end{breakablealgorithm}


%

\section{Convergence analysis} \label{sec:Discussion}
\begin{lemma}
  Under the (i)–(iv) of Assumption 1, the sequence ${\lambda_{k}}$ generated Algorithm \ref{algo:myalg} satisfying 
  \begin{equation}\label{minlambda}
    \min\{\frac{l\mu}{L},\lambda_{0}\}\leq\lambda_{k}\leq\lambda_{0}.
  \end{equation}
\end{lemma}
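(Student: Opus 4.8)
The plan is to treat the two inequalities separately and, for each, to exploit the fact that $\lambda_k=\min\{\lambda_k^{(1)},\lambda_k^{(2)}\}$, so that it suffices to control the two candidate step sizes individually. The upper bound is the easy direction. In both branches of \eqref{t1} the value $\lambda_k^{(1)}$ is bounded above by $\lambda_{k-1}$, so $\lambda_k\le\lambda_k^{(1)}\le\lambda_{k-1}$ for every $k$; hence $\{\lambda_k\}$ is non-increasing and, starting from the initial value $\lambda_0$, one obtains $\lambda_k\le\lambda_0$ at once.

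For the lower bound I set $m^{*}:=\min\{l\mu/L,\lambda_0\}$ and prove $\lambda_k\ge m^{*}$ by induction, the inductive content being carried by the two candidates. Consider first the Armijo candidate $\lambda_k^{(2)}=\gamma l^{m_k}$. I would begin by checking that the line search terminates: since $0<l<1$ and, by Assumption 1(ii), $\|Aw_k-Ay_k\|\le L\|w_k-y_k\|$, the test \eqref{t2} is satisfied as soon as $\gamma l^m L\le\mu$, so $m_k$ is a well-defined finite nonnegative integer (the case $w_k=y_k$ being ruled out by the stopping criterion). If $m_k=0$ then $\lambda_k^{(2)}=\gamma$; if $m_k\ge 1$, minimality of $m_k$ means that $m_k-1$ fails \eqref{t2}, i.e. $\gamma l^{m_k-1}\|Aw_k-Ay_k\|>\mu\|w_k-y_k\|$, and combining this with the Lipschitz bound gives $\gamma l^{m_k-1}L>\mu$, whence multiplying by $l$ yields $\lambda_k^{(2)}=\gamma l^{m_k}>l\mu/L$. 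In either case $\lambda_k^{(2)}\ge\min\{\gamma,l\mu/L\}\ge m^{*}$.

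Now consider the self-adaptive candidate. When $Aw_k-Ay_k\ne 0$, Lipschitz continuity gives
\[
\frac{\mu\|w_k-y_k\|}{\|Aw_k-Ay_k\|}\ge\frac{\mu}{L},
\]
so in every branch of \eqref{t1} we have $\lambda_k^{(1)}\ge\min\{\mu/L,\lambda_{k-1}\}$. Using the inductive hypothesis $\lambda_{k-1}\ge m^{*}$ together with $\mu/L\ge m^{*}$, we get $\lambda_k^{(1)}\ge m^{*}$. Since $\lambda_k=\min\{\lambda_k^{(1)},\lambda_k^{(2)}\}$, the two bounds combine to close the induction and deliver $\lambda_k\ge m^{*}$, which is exactly the left-hand inequality in \eqref{minlambda}.

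The routine parts are the Lipschitz estimates and the bookkeeping in the inductions. The step I expect to demand the most care is the Armijo lower bound: one has to argue cleanly that the index $m_k-1$ violates the test and convert that violation, via one factor of $l$, into the constant $l\mu/L$ appearing in the statement, while first confirming that the line search is well-defined so that $\lambda_k^{(2)}$ exists at all. A secondary point is reconciling the two candidate bounds with the single constant $m^{*}$, which rests on the relationship between the initialization value $\lambda_0$ and the parameter $\gamma$ fixed at initialization (so that $\min\{\gamma,l\mu/L\}\ge m^{*}$).
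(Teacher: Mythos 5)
Your proof is correct and follows essentially the same route as the paper's: Lipschitz continuity gives $\mu\|w_k-y_k\|/\|Aw_k-Ay_k\|\ge\mu/L$ for the self-adaptive candidate, and the failure of the Armijo test at index $m_k-1$, combined with Lipschitz continuity, gives $\lambda_k^{(2)}\ge l\mu/L$. Your write-up is in fact slightly more careful than the paper's, which silently skips the case $m_k=0$ (where $\lambda_k^{(2)}=\gamma$ and the lower bound really does require $\gamma\ge\min\{l\mu/L,\lambda_0\}$, a point you rightly flag) and asserts $\min\{\mu/L,\lambda_0\}\le\lambda_k^{(1)}$ without the induction through $\lambda_{k-1}$ that you supply.
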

\begin{proof}
  Firstly, it is obvious that ${\lambda^{(1)}_{k}}$ is a monotonically decreasing sequence. Since $A$ is a Lipschitz continuous mapping with constant $L>0$,  in the case of $Aw_k-Ay_k\neq0$, we have
    \begin{equation}
      \frac{{\mu \left\| {{w_k} - {y_k}} \right\|}}{{||A{w_k} - A{y_k}||}} \ge \frac{\mu }{L}\frac{{\left\| {{w_k} - {y_k}} \right\|}}{{\left\| {{w_k} - {y_k}} \right\|}} = \frac{\mu }{L} \notag 
    \end{equation}
\end{proof}
which implies that $0<\min(\frac{l\mu}{L},\lambda_0)<\min(\frac{\mu}{L},\lambda_0)\leq \lambda^{(1)}_{k} \leq \lambda_0$.
On the other hand, from the definition of ${\lambda^{(2)}_{k}}$ we have
    \begin{equation}
        \frac{\lambda_{k}^{(2)}}{l}\begin{Vmatrix}Aw_{k}-Ay_{k}\end{Vmatrix}\geq
        \mu\begin{Vmatrix}w_{k}-y_{k}\end{Vmatrix} \notag
    \end{equation}
combining this with $A$ is Lipschitz continuous on $\mathcal{H}$, we obtain
    \begin{equation}
      \frac{\lambda_k^{(2)}}{l}L\left\|w_k-y_k\right\|\geq\mu\left\|w_k-y_k\right\| \notag 
    \end{equation}
so ${\lambda^{(2)}_{k}}\geq \frac{l\mu}{L}$, therefore $\min\{\frac{l\mu}{L},\lambda_0\}\leq \lambda_k \leq \lambda_0$.
\begin{lemma}\label{lm42}
Suppose Assumption 3.1 (i) and (ii) and Assumption 3.2 hold. Then, the sequence $\{x_{k}\}$ generated by Algorithm \ref{algo:myalg} is bounded.
\end{lemma}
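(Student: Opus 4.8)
The plan is to fix a dual solution $p\in S_D$ (nonempty by Assumption 3.1(i)) and to control $a_k:=\|x_k-p\|^2$ through an energy functional that Assumption 3.2 forces to be nonincreasing.

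First I would derive the Tseng estimate. From $y_k=P_C(w_k-\lambda_k Aw_k)$ and the projection characterization (Lemma \ref{p2}) tested at $p\in C$, together with $\langle Ay_k,\,y_k-p\rangle\ge0$ — which holds simply because $p\in S_D$ and $y_k\in C$, so no monotonicity of $A$ is invoked — the update $x_{k+1}=y_k-\lambda_k(Ay_k-Aw_k)$ combined with the polarization identity of Lemma \ref{u-v} gives
\[
\|x_{k+1}-p\|^2\le\|w_k-p\|^2-\|w_k-y_k\|^2+\lambda_k^2\|Aw_k-Ay_k\|^2 .
\]
Because the step-size rule forces $\lambda_k\|Aw_k-Ay_k\|\le\mu\|w_k-y_k\|$ (both $\lambda_k^{(1)}$ and the Armijo choice in \eqref{t2} enforce this, hence so does their minimum), and $\lambda_k$ stays positive by \eqref{minlambda} under Lipschitz continuity (Assumption 3.1(ii)), this reduces to the key inequality
\[
\|x_{k+1}-p\|^2\le\|w_k-p\|^2-(1-\mu^2)\|w_k-y_k\|^2 .
\]

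Next I would unfold the two-step inertial term. Since $w_k-p=(1+\alpha)(x_k-p)-(\alpha-\beta)(x_{k-1}-p)-\beta(x_{k-2}-p)$, I apply Lemma \ref{p1} with $a=\alpha$, $b=\beta$, $x=x_k-p$, $y=x_{k-1}-p$, $z=x_{k-2}-p$ to expand $\|w_k-p\|^2$ exactly in terms of $a_k,a_{k-1},a_{k-2}$ and the difference terms $\|x_k-x_{k-1}\|^2$, $\|x_{k-1}-x_{k-2}\|^2$, $\|x_k-x_{k-2}\|^2$. Substituting yields a recursion for $a_{k+1}$; note $\beta\le0$ by Assumption 3.2(b), so the coefficients of $a_{k-2}$ and of $\|x_k-x_{k-2}\|^2$ carry a definite sign. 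I would then set up a Lyapunov functional of the form
\[
\Gamma_k=\|x_k-p\|^2-\alpha\|x_{k-1}-p\|^2-\beta\|x_{k-2}-p\|^2+\rho\|x_k-x_{k-1}\|^2+\sigma\|x_{k-1}-x_{k-2}\|^2,
\]
with constants $\rho,\sigma\ge0$ drawn from the data. The coefficients $-\alpha,-\beta$ are the natural choice because the characteristic polynomial of the $a$-recursion factors as $x^3-(1+\alpha)x^2-(\beta-\alpha)x+\beta=(x-1)(x^2-\alpha x-\beta)$; with this choice all the pure $a_k,a_{k-1},a_{k-2}$ contributions cancel in $\Gamma_{k+1}-\Gamma_k$, leaving only difference terms and $-(1-\mu^2)\|w_k-y_k\|^2$.

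The main obstacle is to show $\Gamma_{k+1}\le\Gamma_k$. After the $a$-cancellation there remains a positive multiple of $\|x_k-x_{k-1}\|^2$ coming from the Lemma \ref{p1} expansion, while the functional itself carries $\rho\|x_{k+1}-x_k\|^2$ in $\Gamma_{k+1}$. To absorb both, I would bound the forward difference directly from the update: using $x_{k+1}-x_k=(y_k-w_k)-\lambda_k(Ay_k-Aw_k)+\alpha(x_k-x_{k-1})+\beta(x_{k-1}-x_{k-2})$ and the step-size bound,
\[
\|x_{k+1}-x_k\|\le(1+\mu)\|w_k-y_k\|+\alpha\|x_k-x_{k-1}\|+|\beta|\|x_{k-1}-x_{k-2}\| ,
\]
and squaring feeds back a multiple of $\|w_k-y_k\|^2$ (to be dominated by $1-\mu^2$) together with $\alpha^2,\beta^2$ multiples of the two difference terms. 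Collecting coefficients, the requirement that every residual difference-term coefficient be nonpositive while the $\|w_k-y_k\|^2$ coefficient stays nonnegative is exactly the content of the three constraints in Assumption 3.2: (a) caps $\alpha$, (b) pins $\beta$ into the admissible window, and (c) — quadratic in $\beta$ and carrying $\mu$, reflecting the $(1+\mu)^2$ and $\mu^2$ factors produced above — is the inequality closing the estimate. This bookkeeping is the delicate step. Once $\Gamma_{k+1}\le\Gamma_k$ is secured, $\Gamma_k\le\Gamma_1$ gives $a_k\le\Gamma_1+\alpha a_{k-1}+\beta a_{k-2}\le\Gamma_1+\alpha a_{k-1}$, and since $\alpha<1$ by Assumption 3.2(a) a routine induction bounds $\{a_k\}$, hence $\{x_k\}$.
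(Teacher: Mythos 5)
Your proposal is correct and follows essentially the same route as the paper's proof: fix $q\in S_D$, derive the Tseng-type estimate $\|x_{k+1}-q\|^2\le\|w_k-q\|^2-(1-\mu^2)\|w_k-y_k\|^2$ from the projection characterization and the dual-solution inequality, expand $\|w_k-q\|^2$ via Lemma \ref{p1}, and drive a Lyapunov functional of the form $\|x_k-q\|^2-\alpha\|x_{k-1}-q\|^2-\beta\|x_{k-2}-q\|^2$ plus difference terms to be nonincreasing under Assumption 3.2 (your $\Gamma_k$ with the $\sigma\|x_{k-1}-x_{k-2}\|^2$ term is exactly the paper's $\Gamma_k'$). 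The only cosmetic differences are that the paper absorbs the residual through a lower bound on $\|x_{k+1}-w_k\|^2$ rather than an upper bound on $\|x_{k+1}-x_k\|$, and concludes boundedness from a lower estimate on $\Gamma_k$ rather than your scalar recursion $a_k\le\Gamma_1+\alpha a_{k-1}$; both variants are equivalent bookkeeping.
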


\begin{proof}
  Let $q\in S_{D}$. First of all, we estimate  $||x_{k+1}-q||^2$. It can be obtained from the definition of $x_{k+1}$ that:
\begin{align}\label{l42p1}
||x_{k+1}-q||^2 =&||y_{k}-\lambda_{k}(Ay_{k}-Aw_{k})-q||^2
\notag\\
=&||y_{k}-q||^2 +\lambda_{k}^2||Ay_{k}-Aw_{k}||^2-2\lambda_{k}\langle Ay_{k}-Aw_{k},y_{k}-q\rangle
\notag\\
=&||w_{k}-q||^2+||w_{k}-y_{k}||^2+2\langle y_{k}-w_{k},w_{k}-q\rangle+\lambda_{k}^2||Ay_{k}-Aw_{k}||^2
\notag\\
&-2\lambda_{k}\langle Ay_{k}-Aw_{k},y_{k}-q\rangle
\notag\\
=&||w_{k}-q||^2+||w_{k}-y_{k}||^2-2\langle y_{k}-w_{k},y_{k}-w_{k}\rangle+
2\langle y_{k}-w_{k},y_{k}-q\rangle
\notag\\
&+\lambda_{k}^2||Ay_{k}-Aw_{k}||^2-2\lambda_{k}\langle Ay_{k}-Aw_{k},y_{k}-q\rangle
\notag\\
=&||w_{k}-q||^2-||w_{k}-y_{k}||^2+2\langle y_{k}-w_{k},y_{k}-q\rangle+\lambda_{k}^2||Ay_{k}-Aw_{k}||^2
\notag\\
&-2\lambda_{k}\langle Ay_{k}-Aw_{k},y_{k}-q\rangle.
\end{align}
From the definition of $y_{k}$, and the fact that $S_{D}\subset C$,
 it can be obtained by Lemma \ref{p2} that:
$$
  \langle w_{k}-\lambda_{k}Aw_{k}-y_{k},y_{k}-q\rangle\geq0,
$$
that is
\begin{equation}\label{l42p2}
  \langle y_k-w_k,y_k-q\rangle\leq-\lambda_k\langle Aw_k,y_k-q\rangle.
\end{equation}
Appling (\ref{l42p2}) to (\ref{l42p1}), we will have
\begin{align}\label{l43p3}
||x_{k+1}-q||^2 \leq & ||w_{k}-q||^2-||w_{k}-y_{k}||^2-2\lambda_{k}\langle Aw_{k},y_{k}-q\rangle
\notag\\
&+\lambda_{k}^2||Ay_{k}-Aw_{k}||^2-2\lambda_{k}\langle Ay_{k}-Aw_{k},y_{k}-q\rangle
\notag\\
=&||w_{k}-q||^2-||w_{k}-y_{k}||^2+\lambda_{k}^2||Ay_{k}-Aw_{k}||^2
\notag\\
&-2\lambda_{k}\langle Ay_{k},y_{k}-q \rangle.
\end{align}
Since $q\in S_{D}, \lambda_{k}>0$ and $y_{k}\in C$, we have that 
\begin{equation}\label{l42p4}
  \lambda_{k}\langle Ay_{k},y_{k}-q \rangle \geq0.
\end{equation}
Therefore, from (\ref{l43p3}) we can have
\begin{equation}\label{l42p5}
 ||x_{k+1}-q||^2 \leq ||w_{k}-q||^2-||w_{k}-y_{k}||^2+\lambda_{k}^2||Ay_{k}-Aw_{k}||^2.
\end{equation}
From the definition of  $\lambda_{k}$, if ${Aw_{k}-Ay_{k}}\neq0$, then we have
$$
\lambda_{k}^{(1)} = \min\{\frac{\mu||w_{k}-y_{k}||}{||Aw_{k}-Ay_{k}||},\lambda_{k-1}\}
$$
which means:
$$
\lambda_{k}^{(1)} \leq\frac{\mu||w_{k}-y_{k}||}{||{Aw_{k}-Ay_{k}}||},
$$
$$
\lambda_{k}^{(1)}||{Aw_{k}-Ay_{k}}||\leq\mu||w_{k}-y_{k}||.
$$
Also, we have $\lambda_{k}^{(2)}\left\|A w_{k}-A y_{k}\right\| \leq \mu\left\|w_{k}-y_{k}\right\|$. Therefore,
\begin{equation}\label{l42p6}
  \lambda_{k}\left\|A w_{k}-A y_{k}\right\| \leq \mu\left\|w_{k}-y_{k}\right\|.
\end{equation}
Substituting  (\ref{l42p6}) into (\ref{l42p5}), we have
\begin{align}\label{l42p7}
  ||x_{k+1}-q||^2 \leq&||w_{k}-q||^2-||w_{k}-y_{k}||^2+\mu^2||w_{k}-y_{k}||^2
  \notag\\
  =& ||w_{k}-q||^2-(1-\mu^2)||w_{k}-y_{k}||^2.
\end{align}
Observe that
\begin{align}
||x_{k+1}-y_{k}||=&||y_{k}-\lambda_{k}(Ay_{k}-Aw_{k})-y_{k}|| \notag\\
  = & \lambda_{k}||Ay_{k}-Aw_{k}||\notag \\
  \leq & \mu||w_{k}-y_{k}||,\notag
\end{align}
and
\begin{align}\label{l42p8}
||x_{k+1}-w_{k}||\leq &||x_{k+1}-y_{k}||+||w_{k}-y_{k}||\notag \\
  \leq & \mu||w_{k}-y_{k}||+||w_{k}-y_{k}|| \notag\\
  = &  (1+\mu)||w_{k}-y_{k}||.
\end{align}
Hence, using (\ref{l42p8}) into (\ref{l42p7}), we obtain that
\begin{equation}\label{l42p9}
  -||w_{k}-y_{k}||^2\leq-\frac{1}{(1+\mu)^2}||x_{k+1}-w_{k}||^2.
\end{equation}
Applying (\ref{l42p9}) into (\ref{l42p7}), we have
\begin{equation}\label{l42p10}
  ||x_{k+1}-q||^2 \leq ||w_{k}-q||^2 - (\frac{1-\mu}{1+\mu})||x_{k+1}-w_{k}||^2.
\end{equation}
Also, 
\begin{align}
  w_{k}-q = & x_{k}+\alpha(x_{k}-x_{k-1})+\beta(x_{k-1}-x_{k-2})-q\notag\\
  = & (1+\alpha)(x_{k}-q)-(\alpha-\beta)(x_{k-1}-q)-\beta(x_{k-2}-q).\notag
\end{align}
Using Lemma \ref{p1}, we get
\begin{align}\label{wn-q}
  ||w_{k}-q||^2 =& ||(1+\alpha)(x_{k}-q)-(\alpha-\beta)(x_{k-1}-q)-\beta(x_{k-2}-q)||^2 \notag \\
  = & (1+\alpha)||x_{k}-q||^2-(\alpha-\beta)||x_{k-1}-q||^2-\beta||x_{k-2}-q||^2 \notag\\
   & +(1+\alpha)(\alpha-\beta)||x_{k}-x_{k-1}||^2+\beta(1+\alpha)||x_{k}-x_{k-2}||\notag \\
   & -\beta(\alpha-\beta)||x_{k-1}-x_{k-2}||^2.
\end{align}
Furthermore, it can be obtained that
\begin{align}\label{xn+1-wn}
||x_{k+1}-w_{k}||^{2}=&\|x_{k+1}-[x_k+\alpha(x_k-x_{k-1})+\beta(x_{k-1}-x_{k-2})]\|^2  \notag \\
    =&\|x_{k+1}-x_{k}-\alpha(x_{k}-x_{k-1})-\beta(x_{k-1}-x_{k-2})\|^{2}\notag\\
    =&\|x_{k+1}-x_{k}\|^{2}-2\alpha\langle x_{k+1}-x_{k},x_{k}-x_{k-1}\rangle-2\beta\langle x_{k+1}-x_{k},x_{k-1}-x_{k-2}\rangle \notag\\
    &+\alpha^2||x_{k}-x_{k-1}||^2+2\alpha\beta\langle x_{k}-x_{k-1},x_{k-1}-x_{k-2}\rangle+\beta^2||x_{k-1}-x_{k-2}||\notag\\
    \geq&\|x_{k+1}-x_{k}\|^{2}-\alpha\|x_{k+1}-x_{k}\|^{2}-\alpha\|x_{k}-x_{k-1}\|^{2}+\alpha^{2}\|x_{k}-x_{k-1}\| \notag\\
    &-|\beta|\alpha\|x_{k}-x_{k-1}\|^2-|\beta|\alpha\|x_{k-1}-x_{k-2}\|^{2}+\beta^{2}\|x_{k-1}-x_{k-2}\|^{2} \notag\\
    &-|\beta|\|x_{k+1}-x_k\|^2-|\beta|\|x_{k-1}-x_{k-2}\|^2 \notag\\
    =&(1-|\beta|-\alpha)\|x_{k+1}-x_{k}\|^{2}+(\alpha^{2}-\alpha-|\beta|\alpha)\|x_{k}-x_{k-1}\|^{2} \notag\\
    &+(\beta^{2}-|\beta|-|\beta|\alpha)\|x_{k-1}-x_{k-2}\|^{2}.
\end{align}
Applying (\ref{wn-q}) and (\ref{xn+1-wn}) with (\ref{l42p10}) and note that $\beta <0$ we have
\begin{align}
||x_{k+1}-q||^{2}\leq&(1+\alpha)\|x_{k}-q\|^{2}-(\alpha-\beta)\|x_{k-1}-q\|^{2}-\beta\|x_{k-2}-q\|^{2}\notag\\
    &+(1+\alpha)(\alpha-\beta)\|x_k-x_{k-1}\|^2+\beta(1+\alpha)\|x_k-x_{k-2}\|^2\notag\\
    &-\beta(\alpha-\beta)\|x_{k-1}-x_{k-2}\|^2-\Big(\frac{1-\mu}{1+\mu}\Big)(1-|\beta|-\alpha)\|x_{k+1}-x_k\|^2\notag\\
    &-\Big(\frac{1-\mu}{1+\mu}\Big)(\alpha^2-\alpha-|\beta|\alpha)\|x_k-x_{k-1}\|^2\notag\\
    &-\Big(\frac{1-\mu}{1+\mu}\Big)(\beta^2-|\beta|-|\beta|\alpha)\|x_{k-1}-x_{k-2}\|^2\notag\\
    \leq &(1+\alpha)\|x_k-q\|^2-(\alpha-\beta)\|x_{k-1}-q\|^2-\beta\|x_{k-2}-q\|^2+\Big[(1+\alpha)(\alpha-\beta)\notag\\
    &-\Big(\frac{1-\mu}{1+\mu}\Big)(\alpha^2-\alpha+\beta\alpha)\Big]\|x_k-x_{k-1}\|^2\notag\\
    &-[\beta(\alpha-\beta)+\Big(\frac{1-\mu}{1+\mu}\Big)(\beta^2+\beta+\beta\alpha)]\|x_{k-1}-x_{k-2}\|^2\notag\\
    &-\Big(\frac{1-\mu}{1+\mu}\Big)(1+\beta-\alpha)\|x_{k+1}-x_k\|^2.
\end{align}
By rearranging the inequality above we obtain that:
\begin{align}\label{Rearange}
&\|x_{k+1}-q\|^{2}-\alpha\|x_{k}-q\|^{2}-\beta\|x_{k-1}-q\|^{2}+\Big(\frac{1-\mu}{1+\mu}\Big)(1+\beta-\alpha)\|x_{k+1}-x_{k}\|^{2}\notag\\
&\leq\|x_{k}-q\|^{2}-\alpha\|x_{k-1}-q\|^{2}-\beta\|x_{k-2}-q\|^{2}+\Big(\frac{1-\mu}{1+\mu}\Big)(1+\beta-\alpha)\|x_{k}-x_{k-1}\|^{2}\notag\\
&+[(1+\alpha)(\alpha-\beta)-\big(\frac{1-\mu}{1+\mu}\big)(\alpha^2-2\alpha+\beta\alpha+\beta+1)\big]\|x_k-x_{k-1}\|^2\notag\\
&-[\beta(\alpha-\beta)+\left(\frac{1-\mu}{1+\mu}\right)(\beta^2+\beta+\beta\alpha)]\|x_{k-1}-x_{k-2}\|^2.
\end{align}
Now, let  
$$
\Gamma_{k} = \|x_k-q\|^2-\alpha\|x_{k-1}-q\|^2-\beta\|x_{k-2}-q\|^2+\left(\frac{1-\mu}{1+\mu}\right)(1+\beta-\alpha)\|x_k-x_{k-1}\|^2.
$$
Then we can rewrite (\ref{Rearange}) as 
\begin{align}\label{l42p12}
  \Gamma_{k+1} \leq & \Gamma_k+[(1+\alpha)(\alpha-\beta)
   -\left(\frac{1-\mu}{1+\mu}\right)(\alpha^2-2\alpha+\beta\alpha+\beta+1)]\|x_k-x_{k-1}\|^2 \notag\\
  & -[\beta(\alpha-\beta)+\left(\frac{1-\mu}{1+\mu}\right)(\beta^2+\beta+\beta\alpha)]\|x_{k-1}-x_{k-2}\|^2. 
\end{align}
{\bf Claim:} $\Gamma_{k}\geq0$, $\forall k>1$. Since
\begin{align}\label{Tn=}
\Gamma_{k}
=&\|x_{k}-q\|^{2}-\alpha\|x_{k-1}-q\|^{2}-\beta\|x_{k-2}-q\|^{2}+\left(\frac{1-\mu}{1+\mu}\right)(1+\beta-\alpha)\|x_{k}-x_{k-1}\|^{2} \notag\\
\geq&\|x_{k}-q\|^{2}-2\alpha\|x_{k}-x_{k-1}\|^{2}-2\alpha\|x_{k}-q\|^{2}-\beta\|x_{k-2}-q\|^{2} \notag\\
&+\left(\frac{1-\mu}{1+\mu}\right)(1+\beta-\alpha)\|x_k-x_{k-1}\|^2 \notag\\
=&(1-2\alpha)\|x_{k}-q\|^{2}+\left[\left(\frac{1-\mu}{1+\mu}\right)(1+\beta-\alpha)-2\alpha\right]\|x_{k}-x_{k-1}\|^{2} \notag\\
&-\beta||x_{k-2}-q||^2.
\end{align}
From Assumption 2, we have that $\alpha<\frac{1}{2},\beta\leq0,\frac{\alpha(3+\mu)}{1-\mu}-1\leq\beta$ and $0<\alpha<\frac{1-\mu}{3+\mu}$, it can be conclude that $\Gamma_{k}\geq 0$, $\forall k>1$. Let

\begin{align}
  k_{1} :=& -[(1+\alpha)(\alpha-\beta)-\left(\frac{1-\mu}{1+\mu}\right)(\alpha^2-2\alpha+\beta\alpha+\beta+1)] \notag\\
  k_{2} :=& -[(1+\alpha)(\alpha-\beta)-\left(\frac{1-\mu}{1+\mu}\right)(\alpha^2-2\alpha+\beta\alpha+\beta+1)-\beta(\alpha-\beta) -\left(\frac{1-\mu}{1+\mu}\right)(\beta^2+\beta+\beta\alpha)]. \notag\\
\end{align}
Thus, we deduce from (\ref{l42p12}) that
\begin{equation}\label{gamma-gamma}
\Gamma_{k+1}-\Gamma_n\leq k_1(\|x_{k-1}-x_{k-2}\|^2-\|x_k-x_{k-1}\|^2)-k_2\|x_{k-1}-x_{k-2}\|^2.
\end{equation}
From Assumption 2(b), 
\begin{equation}
  \max\Big\{2\alpha (\frac{1-\mu}{3+\mu})-(1-\alpha),\frac{1}{2}[\alpha(1+\mu)-(\frac{(1-\mu)(1-\alpha)^2}{1+\alpha})]\Big\}<\beta\leq0, \notag 
\end{equation}
which implies that 
\begin{equation}\label{2bchange}
  \frac{1}{2}\left[\alpha(1+\mu)-\frac{(1-\mu)(1-\alpha)^2}{1+\alpha}\right]<\beta,
\end{equation}
which can conclude that $k_{1}>0$. \\
By Assumption 2(c) we can have $k_{2}>0$, thus, (\ref{gamma-gamma}) can be rewritten as
\begin{equation}\label{gammak1}
\Gamma_{k+1}+k_1\|x_k-x_{k-1}\|^2\leq\Gamma_k+k_1\|x_{k-1}-x_{k-2}\|^2-k_2\|x_{k-1}-x_{k-2}\|^2.
\end{equation}
Letting $\Gamma_k^{\prime}=\Gamma_k+k_1\|x_{k-1}-x_{k-2}\|^2.$ Then, $\Gamma_k^{^{\prime}}\geq0$,  $\forall k\geq1$. Therefore, we deduce from (\ref{gammak1}) that 
\begin{equation}\label{gamma'}
  \Gamma_{k+1}^{'}\leq\Gamma_{k}^{'},
\end{equation}
which implies that  the sequence ${\Gamma_{k+1}^{'}}$ is decreasing and bounded from below
and thus $\lim\limits_{n\rightarrow\infty}\Gamma_{k}^{'}$ exists. \\
Hence, by rearranging (\ref{gammak1}) and letting $k$ approach infinity, we have
\begin{equation}\label{lim0}
\lim_{k\rightarrow\infty}k_{2}||x_{k-1}-x_{k-2}||^2=0 \implies\lim_{k\rightarrow\infty}||x_{k-1}-x_{k-2}||=0.
\end{equation}
It is easy to see that 
\begin{align}\label{xn1wn}
\|x_{k+1}-w_{k}\|
=&\|x_{k+1}-x_{k}-\alpha(x_{k}-x_{k-1})-\beta(x_{k-1}-x_{k-2})\| \notag\\
\leq&\|x_{k+1}-x_{k}\|+\alpha\|x_{k}-x_{k-1}\|+\beta\|x_{k-1}-x_{k-2}\|.
\end{align} 
Consequently, by (\ref{lim0}) we have 
\begin{align}\label{xn1wn2}
\lim_{k\rightarrow\infty}\|x_{k+1}-w_{k}\|=0.
\end{align} 
Furthermore,
\begin{align}
\|x_{k}-w_{k}\|
=&\|x_{k}-x_{k}-\alpha(x_{k}-x_{k-1})-\beta(x_{k-1}-x_{k-2})\|\notag\\
\leq&\alpha\|x_{k}-x_{k-1}\|+\beta\|x_{k-1}-x_{k-2}\|\rightarrow0, k\rightarrow\infty.
\end{align}
Observe that
\begin{align}\label{xn1wn->0}
\|x_{k+1}-w_{k}\|=&\quad\|y_{k}-\lambda_{k}(Ay_{k}-Aw_{k})-w_{k}\|\notag\\
\geq&\quad\|y_{k}-w_{k}\|-\lambda_{k}\|Ay_{k}-Aw_{k}\|\notag\\
\geq&\quad\|y_{k}-w_{k}\|-\mu\|y_{k}-w_{k}\|\notag\\
=&\quad(1-\mu)\|y_{k}-w_{k}\|.
\end{align}
Using (\ref{xn1wn2} )we deduce from (\ref{xn1wn->0}) that 
\begin{equation}\label{wnyn->0}
\lim_{k\rightarrow\infty}\|w_k-y_k\| = 0.
\end{equation}
Also,
\begin{equation}\label{xnyn}
\|x_k-y_k\|\leq\|x_k-w_k\|+\|w_k-y_k\|\to0, k\to\infty.
\end{equation}
Due to the existence of the limit $\Gamma_{k}^{'}$ and (\ref{lim0}), 
we can obtain that the limit of $\Gamma_{k}$  also exists and therefore, the sequence $\{\Gamma_{k}\}$ is bounded.\newline
Since $\lim\limits_{k\rightarrow\infty}||x_{k+1}-x_{k}||=0$, from the definition of $\Gamma_{k}$ we can get that 
\begin{equation}\label{lim2}
  \lim_{k\to\infty}(\|x_k-q\|^2-\alpha\|x_{k-1}-q\|^2-\beta\|x_{k-2}-q\|^2)
\end{equation}
exists. Due to the boundedness of $\{\Gamma_{k}\}$, we have from (\ref{Tn=}) that $\{x_{k}\}$ is bounded and $\{y_{k}\}$ and $\{w_{k}\}$ are bounded.
\end{proof}
\begin{lemma}\label{case}
Let $\{x_{k}\}$ be the sequence generated by algorithm \ref{algo:myalg}, satisfying Assumptions 1(i)-(iv) and Assumptions 2(a)-(c), and assuming that $\lim \limits_{k\rightarrow\infty}w_{k}-y_{k} = 0$. 
If $v^*$is one of the weakly clustered points of ${y_k}$, then we have at least one of the following: $v^*\in S_D$ or $Av^*= 0$.
\end{lemma}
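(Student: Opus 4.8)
The plan is to fix a subsequence $\{y_{k_j}\}$ with $y_{k_j}\rightharpoonup v^*$; since each $y_k\in C$ and $C$ is weakly closed, $v^*\in C$, and the hypothesis $\|w_k-y_k\|\to 0$ forces $w_{k_j}\rightharpoonup v^*$ as well. I would then split the argument according to the behaviour of $\liminf_{j}\|Aw_{k_j}\|$. If this liminf equals $0$, I would pass to a further subsequence along which $\|Aw_{k_j}\|\to 0$; since weak convergence to $v^*$ is retained, Assumption 1(iii) gives $\|Av^*\|\le\liminf_{j}\|Aw_{k_j}\|=0$, hence $Av^*=0$, which is the second alternative. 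All the remaining work is to show that when $\liminf_{j}\|Aw_{k_j}\|=:\rho>0$ one necessarily has $v^*\in S_D$.

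For the latter case, I would start from the projection characterisation of $y_k$ (Lemma \ref{p2}) applied to $y_k=P_C(w_k-\lambda_kAw_k)$, which after rearranging yields, for every $z\in C$,
\[
\langle Aw_k,\,z-y_k\rangle\ \ge\ \tfrac{1}{\lambda_k}\langle w_k-y_k,\,z-y_k\rangle .
\]
Using the uniform lower bound $\lambda_k\ge\min\{l\mu/L,\lambda_0\}>0$ from (\ref{minlambda}), the boundedness of $\{y_k\}$ (Lemma \ref{lm42}), and $\|w_k-y_k\|\to 0$, the right-hand side tends to $0$ along $\{k_j\}$; combining this with $\langle Aw_{k_j},y_{k_j}-w_{k_j}\rangle\to 0$ (valid because $A$ is Lipschitz, so $\{Aw_{k_j}\}$ is bounded while $\|y_{k_j}-w_{k_j}\|\to 0$), I would obtain the key estimate $\liminf_{j}\langle Aw_{k_j},z-w_{k_j}\rangle\ge 0$ for every $z\in C$.

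To convert this asymptotic inequality into the dual inequality $\langle Az,z-v^*\rangle\ge 0$, I would exploit that $A$ is quasimonotone on all of $H$ (Assumption 1(iv)). Fix $z\in C$ and a sequence $\epsilon_m\downarrow 0$, and set $u_j:=Aw_{k_j}/\|Aw_{k_j}\|^2$, which is well defined and bounded by $1/\rho$ precisely because $\rho>0$. For the shifted point $z_j:=z+\epsilon_m u_j$ one has $\langle Aw_{k_j},z_j-w_{k_j}\rangle=\langle Aw_{k_j},z-w_{k_j}\rangle+\epsilon_m$, so the key estimate guarantees $\langle Aw_{k_j},z_j-w_{k_j}\rangle>0$ for all large $j$; quasimonotonicity then gives $\langle Az_j,z_j-w_{k_j}\rangle\ge 0$. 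I would use Lipschitz continuity to replace $Az_j$ by $Az$ up to an error controlled by $L\epsilon_m/\rho$, let $j\to\infty$ (via $w_{k_j}\rightharpoonup v^*$ together with boundedness), and finally let $m\to\infty$ to conclude $\langle Az,z-v^*\rangle\ge 0$. Since $z\in C$ is arbitrary, $v^*\in S_D$.

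I expect the main obstacle to be this perturbation/double-limit step. One must keep $z_j$ admissible for quasimonotonicity, which is exactly why Assumption 1(iv) on all of $H$ (rather than merely on $C$) is essential, since $z_j\notin C$ in general; and one must manage the interchange of the limits in $j$ and $m$ while simultaneously controlling three vanishing error terms — the $\epsilon_m\langle Az_j,u_j\rangle$ contribution, the Lipschitz term $\langle Az_j-Az,\,z-w_{k_j}\rangle$, and the original right-hand side — each relying on a different earlier bound, namely the boundedness of the iterates, the Lipschitz continuity of $A$, and the uniform lower bound on $\lambda_k$, respectively.
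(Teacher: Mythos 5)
Your proof is correct and follows essentially the same route as the paper's: the same dichotomy on whether the operator values vanish along the weakly convergent subsequence (giving $Av^*=0$ via Assumption 1(iii)), the same inequality $\liminf_{j}\langle Aw_{k_j},z-w_{k_j}\rangle\ge 0$ obtained from the projection characterization and the uniform lower bound on $\lambda_k$, and the same perturbation $z+\epsilon u_j$ with $u_j=Aw_{k_j}/\|Aw_{k_j}\|^2$ combined with quasimonotonicity and Lipschitz continuity. The only differences are cosmetic: you phrase everything in terms of $Aw_{k_j}$ rather than $Ay_{k_j}$ (equivalent since $\|w_{k_j}-y_{k_j}\|\to 0$ and $A$ is Lipschitz, though Assumption 1(iii) is stated for sequences in $C$, so formally it should be invoked for $y_{k_j}$), and your single two-parameter perturbation argument absorbs the paper's separate subcase in which $\limsup_{j}\langle Ay_{k_j},x-y_{k_j}\rangle>0$ is treated by applying quasimonotonicity directly without any perturbation.
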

\begin{proof}
  From the lemma (\ref{lm42}) above, it can be concluded that $\{y_{k}\}$ is bounded. Therefore let $v^*$ be a weak cluster point of $\{y_{k}\}$.
  Hence, we denote $\{y_{k_{j}}\}$ as a subsequence of $\{y_{k}\}$ such that $y_{k_{j}}\rightharpoonup v^*\in C$.
  
  Now we discuss in two cases.
    \begin{case}
      First, we assume that $\lim \limits_{k\rightarrow\infty}||Ay_{k_{j}}||=0$. Consequently, 
      $\lim \limits_{k\rightarrow\infty}||Ay_{k_{j}}||=\liminf \limits_{k\rightarrow\infty}||Ay_{k_{j}}||=0$
      
      Through the assumption above that $y_{k_{j}}\rightharpoonup v^*\in C$ and $A$ satisfies Assumption 1(iii), that is
        \begin{equation}\label{0<avlim}
          0\leq||Av^*||\leq\liminf \limits_{k\rightarrow\infty}||Ay_{k_{j}}||=0.
        \end{equation}
      Which just implies that $Av^*=0$.
    \end{case}
    Now we consider the another situation.

    \begin{case}
        If\quad $\limsup\limits_{k\to\infty}\|Ay_{k_j}\|>0$. Without loss of generality, we take $\lim\limits_{k\to\infty}\|Ay_{k_j}\|=M_{1}>0$. It then follows that there exists a $\mathbf{K}\in\mathbb{K}$ such that $\|Ay_{k_j}\|>\frac{M_{1}}{2}$ for all $k\geq \mathbf{K}$ Since $y_{k_{j}}=P_C(w_{k_{j}}-\lambda_{k_{j}}Aw_{k_{j}})$, we have 
        
        $$
        \langle w_{k_{j}}-\lambda_{k_{j}}Aw_{k_{j}}-y_{k_{j}},x-y_{k_{j}}\rangle \leq 0,
        $$
        \begin{equation}\label{ca2}
          \frac{1}{\lambda_{k_{j}}}\langle w_{k_{j}}-y_{k_{j}},x-y_{k_{j}} \rangle
          +\langle Aw_{k_{j}},y_{k_{j}}-w_{k_{j}}\rangle\leq\langle Aw_{k_{j}},x-w_{k_{j}} \rangle.
        \end{equation}
      For the weak convergence of $w_{k_{j}}$, $w_{k_{j}}$ is bounded. Then, for the $A$ Lipschitz continuous, $Aw_{k_{j}}$ is bounded. By (\ref{wnyn->0}) we can get that 
      $$
      \|w_{k_{j}}-y_{k_{j}}\|\to0,k\to\infty;
      $$
      therefore $\|y_{k_{j}}\|$ is also bounded and through the Lemma(\ref{minlambda}) we get that 
      $\lambda_{k_{j}}\geq min\{\frac{l\mu}{L},\lambda_{0}\}$. Passing (\ref{ca2}) to the limit as 
      $k\rightarrow\infty$, that is
      \begin{equation}\label{liminf}
        \liminf \limits_{k\rightarrow\infty}\langle Aw_{k_{j}},x-w_{k_{j}} \rangle \geq 0,\forall x \in C.
      \end{equation}
      Observe that
      \begin{align}\label{aynkxynk}
        \langle Ay_{k_{j}},x-y_{k_{j}}\rangle  
        =& \langle Ay_{k_{j}}-Aw_{k_{j}},x-w_{k_{j}}\rangle+\langle Aw_{k_{j}},x-w_{k_{j}}\rangle\notag\\
        &+\langle Ay_{k_{j}},w_{k_{j}}-y_{k_{j}}\rangle.
      \end{align}  
      Thus we can get  $\lim \limits_{k\rightarrow\infty}||Aw_{k_{j}}-Ay_{k_{j}}||=0$, for the 
      $\lim \limits_{k\rightarrow\infty}||w_{k_{j}}-y_{k_{j}}||=0$ and the $L$ -Lipschitz continuity on $H$ of $A$.
      Together with (\ref{liminf}) and (\ref{aynkxynk}) which implies that 
      \begin{equation}\label{liminfaynk}
        \lim \limits_{k\rightarrow\infty}\langle Ay_{k_{j}},x-y_{k_{j}}\rangle \geq 0.
      \end{equation}
      If we suppose that $\limsup\limits_{k\rightarrow\infty}\langle Ay_{k_{j}},x-y_{k_{j}}\rangle>0$, then there exists a subsequence denoted by $y_{n_{k_{j}}}$ 
      such that $\lim\limits_{j\rightarrow\infty}\langle Ay_{n_{k_{j}}},x-y_{n_{k_{j}}} \rangle > 0$,
      that is there exists $j_{0}\in \mathbb{K}$ such that 
      \begin{equation}\label{liminfaynk}
        \langle Ay_{n_{k_{{j}}}},x-y_{k_{j}}\rangle \geq 0, \forall j > j_{0}.
      \end{equation}
      Through the quasimonotonicity(\ref{def}) of $A$, we get $\langle Ax,x-y_{n_{k_{j}}}\rangle\geq0$, as $j\rightarrow\infty$, we conclude that $v^*\in S_{D}$
      On the other hand we suppose that if $\limsup\limits_{k\rightarrow\infty}\langle Ay_{k_{j}},x-y_{k_{j}}\rangle=0$, plus (\ref{liminf}) implies that 
      \begin{equation}\label{liminfaynkcase2}
        \lim \limits_{k\rightarrow\infty}\langle Ay_{k_{j}},x-y_{k_{j}}\rangle = 0.
      \end{equation}
      Let $\epsilon_{k}:= |Ay_{k_{j}},x-y_{k_{j}}|+\frac{1}{k+1}$. Thus we obtain that 
      \begin{equation}\label{axye}
        \langle Ay_{k_{j}},x-y_{k_{j}}\rangle + \epsilon_{k} > 0, \forall k\geq 1.
      \end{equation}
      Furthermore, for each $k\geq \mathbf{K}$ we can get $Ay_{k_{j}}\neq0$. Defining that 
      \begin{equation}\label{q}
        r_{k_{j}} = \frac{Ay_{k_{j}}}{||Ay_{k_{j}}||^2}, \forall~k\geq \mathbf{K}, \notag
      \end{equation}
      then $\langle Ay_{k_{j}},r_{k_{j}}\rangle=1$ for each $k\geq \mathbf{K}$.
      Thus we can conclude from (\ref{axye}) that,
      \begin{equation}\label{ayxeqy}
        \langle Ay_{k_{j}},x+\epsilon_{k}r_{k_{j}}-y_{k_{j}}\rangle > 0,~ k\geq \mathbf{K}.\notag
      \end{equation}
      Since $A$  is quasimonotone on $H$, we get
      \begin{equation}\label{ayxeqy}
        \langle A(x+\epsilon_{k}r_{k_{j}}),x+\epsilon_{k}r_{k_{j}}-y_{k_{j}}\rangle \geq 0.
      \end{equation}
      Thus
      \begin{align}
      \langle Ax,x+\epsilon_{k}r_{k_{j}}-y_{k_{j}}\rangle =&\langle Ax-A(x+\epsilon_{k}r_{k_{j}}),x+\epsilon_{k}r_{k_{j}}-y_{k_{j}}\rangle\notag\\
      &+\langle A(x+\epsilon_{k}r_{k_{j}}),x+\epsilon_{k}r_{k_{j}}-y_{k_{j}}\rangle \rangle \notag \\
      \geq&\langle Ax-A(x+\epsilon_{k}r_{k_{j}}),x+\epsilon_{k}r_{k_{j}}-y_{k_{j}}\rangle \notag\\
      \geq&-\|Ax-A(x+\epsilon_kq_{n_k})\|\|x+\epsilon_kq_{n_k}-y_{k_j}\| \notag\\
      \geq&-\epsilon_{k}L\|r_{k_{j}}\|\|x+\epsilon_{k}r_{k_{j}}-y_{k_{j}}\| \notag\\
      =&-\epsilon_{k}L\frac{1}{\|Ay_{k_{j}}\|}\|x+\epsilon_{k}r_{k_{j}}-y_{k_{j}}\| \notag\\
      \geq&-\epsilon_{k}L\frac{2}{M_{1}}\|x+\epsilon_{k}r_{k_{j}}-y_{k_{j}}\|.
      \end{align}
      Observe that, tending $k\rightarrow\infty$, for $\{x+\epsilon_{k}r_{k_{j}}-y_{k_{j}}\}$ is bounded 
      and $\lim\limits_{k\rightarrow\infty}\epsilon_{k}=0$,
      we can conclude that $\langle Ax,x-v^*\rangle\geq0,~\forall x\in C$. This implies that $v^*\in S_D$.
    \end{case} 
\end{proof}
\begin{theorem}
  Suppose $\{x_{k}\}$ is a sequence generated by Algorithm \ref{algo:myalg}. Then
under Assumptions 1 and Assumptions 2 and $Ax\neq 0$, $\forall x \in C$. Then ${x_{k}}$ converges weakly to an element of $S_{D} \subset S$.
\end{theorem}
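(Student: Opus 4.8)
The plan is to invoke Opial's lemma for weak convergence, which requires two ingredients: that $\lim_{k\to\infty}\|x_k-q\|$ exists for every $q\in S_D$, and that every weak sequential cluster point of $\{x_k\}$ lies in $S_D$. Most of the groundwork has already been laid in Lemma \ref{lm42} and Lemma \ref{case}, so the substantive task is to assemble these pieces and, crucially, to extract the existence of $\lim_{k\to\infty}\|x_k-q\|$ from the existence of the limit of the \emph{combined} quantity $\|x_k-q\|^2-\alpha\|x_{k-1}-q\|^2-\beta\|x_{k-2}-q\|^2$ recorded in \eqref{lim2}.

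First I would fix $q\in S_D$ and set $a_k:=\|x_k-q\|^2$. Since Lemma \ref{lm42} gives $\|x_k-x_{k-1}\|\to 0$ and $\{x_k\}$ is bounded, the elementary identity $a_k-a_{k-1}=\langle x_k-x_{k-1},\,x_k+x_{k-1}-2q\rangle$ forces $a_k-a_{k-1}\to 0$, and likewise $a_k-a_{k-2}\to 0$. Writing the combined quantity as $(1-\alpha-\beta)a_k+\alpha(a_k-a_{k-1})+\beta(a_k-a_{k-2})$ and passing to the limit using \eqref{lim2}, the last two terms vanish, so $(1-\alpha-\beta)a_k$ converges. Because Assumption 2 forces $0\le\alpha<1$ and $\beta\le 0$, we have $1-\alpha-\beta>0$, whence $\lim_{k\to\infty}a_k$ — and therefore $\lim_{k\to\infty}\|x_k-q\|$ — exists.

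Next I would verify the cluster-point condition. Since Lemma \ref{lm42} yields $\|x_k-y_k\|\to 0$, the sequences $\{x_k\}$ and $\{y_k\}$ share the same weak sequential cluster points. Let $v^*$ be any such point, so along a subsequence $y_{k_j}\rightharpoonup v^*\in C$. Lemma \ref{case} (whose hypothesis $\|w_k-y_k\|\to 0$ is supplied by \eqref{wnyn->0}) tells us that either $v^*\in S_D$ or $Av^*=0$. The added hypothesis $Ax\neq 0$ for all $x\in C$ rules out the second alternative, since $v^*\in C$; hence $v^*\in S_D$.

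With both Opial conditions established, Opial's lemma gives that $\{x_k\}$ converges weakly to a single point of $S_D$, and since $S_D\subset S$ this completes the proof. I expect the main obstacle to be the decoupling step of the second paragraph: unlike the one-step inertial case, where $\|x_k-q\|$ is directly Fej\'er-type monotone, here only the three-term linear combination is controlled, so one must lean on the asymptotic regularity $\|x_k-x_{k-1}\|\to 0$ together with the sign and size constraints on $\alpha,\beta$ in Assumption 2 to recover convergence of $\|x_k-q\|$ itself; verifying $1-\alpha-\beta>0$ from those constraints is what makes the recovery legitimate.
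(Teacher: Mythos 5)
Your argument is correct, and it reaches the conclusion by a genuinely different route from the paper's. The paper does not first recover $\lim_{k\to\infty}\|x_k-q\|$; instead it keeps the three-term combination intact and runs a two-subsequence uniqueness argument: for two weak cluster points $x^*,v^*$ it expands $2\langle x_k-\alpha x_{k-1}-\beta x_{k-2},\,x^*-v^*\rangle$ as the difference of the combined quantities for $v^*$ and $x^*$ plus $(1-\alpha-\beta)(\|x^*\|^2-\|v^*\|^2)$, deduces from \eqref{lim2} that this inner product converges, evaluates the limit along both subsequences, and arrives at $(1-\alpha-\beta)\|x^*-v^*\|^2=0$ as in \eqref{llast}. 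Your decoupling step --- writing $a_k-\alpha a_{k-1}-\beta a_{k-2}=(1-\alpha-\beta)a_k+\alpha(a_k-a_{k-1})+\beta(a_k-a_{k-2})$ with $a_k=\|x_k-q\|^2$, and killing the last two terms via boundedness and $\|x_k-x_{k-1}\|\to0$ from \eqref{lim0} --- is sound and lets you fall back on the classical Opial lemma, which the paper never invokes explicitly. Both proofs ultimately hinge on the same fact, $1-\alpha-\beta>0$ (from $0\le\alpha<1$ and $\beta\le0$ in Assumption 2), and both use Lemma \ref{case} together with the hypothesis $Ax\neq0$ on $C$ to place every weak cluster point in $S_D$. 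Your version is arguably more transparent since it isolates the single quantity $\|x_k-q\|$ whose convergence drives the whole argument; the paper's version avoids the decoupling at the cost of a longer polarization computation. The only cosmetic caveat is that Opial's lemma is not stated in the paper's preliminaries, so in the paper's own framework you would either cite it or reproduce the short uniqueness argument, which is essentially what the paper does.
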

\begin{proof}
  Suppose $w_{w}(x_{k})$ is a set of weak cluster points of $\{x_{k}\}$. Which is 
    $$
    w_{w}(x_{k})\subset S_{D}.
    $$
  We take $v^*\in w_{w}(x_{k})$.Therefore, there exists a subsequence $\{x_{n_{k}}\}\subset\{x_{k}\}$ such that $\{x_{n_{k}}\}\rightarrow v^*$, 
  $k \rightarrow\infty$. Since $C$ is weakly closed, we have that $v^*\in C$.
  Furthermore, we can conclude that $Av^*\neq0$ for $Ax\neq0, ~\forall x\in C $.
  By Lemma (\ref{case}), we have $v^*\in S_{D}$. Hence $w_{w}(x_{k})\subset S_{D}$.
  By Lemma (\ref{lm42}), $\lim\limits_{n\rightarrow\infty}\Gamma_{k}$ exists and 
  $\lim\limits_{n\rightarrow\infty}||x_{k+1}-x_{k}||=0$, we have 
  \begin{equation}\label{lim}
   \lim\limits_{n\rightarrow\infty}[||x_{k}-q||^2-\alpha||x_{k-1}-q||^2-\beta||x_{k-1}-q||^2]
  \end{equation}
  exists for $\forall q \in S_{D}$.
  
  Next we show that $x_{k}\rightharpoonup x^*\in S_{D}$. Let $\{x_{n_{j}}\}$ and $\{x_{n_{m}}\}$ both $\subset \{x_{k}\}$ such that $x_{n_{j}}\rightharpoonup v^*, j\rightarrow\infty$ and $x_{n_{m}}\rightharpoonup x^*, m\rightarrow\infty$.
  
  Then we show that $x^*=v^*$, using (\ref{u-v}) observe that 
  \begin{equation}\label{2<>1}
    2\langle x_{k}, x^*-v^*\rangle = ||x_{k}-v^*||^2-||x_{k}-x^*||^2-||v^*||^2+||x^*||^2;
  \end{equation}
  
  \begin{equation}\label{2<>2}
    2\langle x_{k-1}, x^*-v^*\rangle = ||x_{k-1}-v^*||^2-||x_{k-1}-x^*||^2-||v^*||^2+||x^*||^2;
  \end{equation}
  and
  \begin{equation}\label{2<>3}
    2\langle x_{k-2}, x^*-v^*\rangle = ||x_{k-2}-v^*||^2-||x_{k-2}-x^*||^2-||v^*||^2+||x^*||^2;
  \end{equation}
  Therefore, 
  \begin{equation}\label{2<>4}
    2\langle -\alpha x_{k-1}, x^*-v^*\rangle = -\alpha||x_{k-1}-v^*||^2+\alpha||x_{k-1}-x^*||^2+\alpha||v^*||^2-\alpha||x^*||^2;
  \end{equation}
  and
  \begin{equation}\label{2<>5}
    2\langle -\beta x_{k-1}, x^*-v^*\rangle = -\beta||x_{k-1}-v^*||^2+\beta||x_{k-1}-x^*||^2+\beta||v^*||^2-\beta||x^*||^2;
  \end{equation}
  Addition of (\ref{2<>1}), (\ref{2<>4}) and (\ref{2<>5}) gives
  \begin{align}\label{addi}
     2\langle x_k-\alpha x_{k-1}-\beta x_{k-2},x^*-v^*\rangle \notag
     =& \left(\|x_k-v^*\|^2-\alpha\|x_{k-1}-v^*\|^2-\beta\|x_{k-2}-v^*\|^2\right) \notag \\
     &-(\|x_k-x^*\|^2-\alpha\|x_{k-1}-x^*\|^2-\beta\|x_{k-2}-x^*\|^2)  \notag\\
     &+(1-\alpha-\beta)(\|x^*\|-\|v^*\|^2). 
  \end{align}
  Through (\ref{lim2}) we know that
  \begin{equation}\label{lim23}
  \lim\limits_{n\to\infty}(\|x_k-x^*\|^2-\alpha\|x_{k-1}-x^*\|^2-\beta\|x_{k-2}-x^*\|^2)
  \end{equation}
  exists, and 
  \begin{equation}\label{lim24}
  \lim\limits_{n\to\infty}(\|x_k-v^*\|^2-\alpha\|x_{k-1}-x^*\|^2-\beta\|x_{k-2}-x^*\|^2)
  \end{equation}
  also exists, which implies with (\ref{addi}) that 
  $\lim\limits_{n\to\infty}\langle x_{k}-\alpha x_{k-1}-\beta x_{k-2},x^*-v^* \rangle$ exists.
  
  Consequently,
  \begin{align}\label{last}
  \langle v^*-\alpha v^*-\beta v^*, x^*-v^* \rangle
    =&\lim\limits_{j\rightarrow\infty} \langle x_{k_{j}}-\alpha x_{k_{j-1}}-\beta x_{n_{j-2}},x^*-v^* \rangle\notag\\
    =&\lim\limits_{j\rightarrow\infty} \langle x_{k}-\alpha x_{k-1}-\beta x_{k-2},x^*-v^* \rangle \notag\\
    =&\lim\limits_{j\rightarrow\infty} \langle x_{k_{m}}-\alpha x_{k_{m-1}}-\beta x_{k_{m-2}},x^*-v^* \rangle \notag\\
    =&\langle x^*-\alpha x^*-\beta x^*,x^*-v^* \rangle . 
  \end{align}
  Thus,
  \begin{equation}\label{llast}
    (1-\alpha-\beta)||x^*-v^*||^2 = 0.
  \end{equation}
  For $\beta\leq0<1-\alpha$, $1-\alpha-\beta \neq0$, we can conclude that $x^*=v^*$.
  
  Hence, we deduce that $\{x_{k}\}$ converges weakly to a point in $S_{D}$. This completes the proof.
\end{proof}


%

\section{Conclusions}
This paper proposed a novel two-step inertial Tseng extragradient method for solving quasimonotone variational inequalities in real Hilbert spaces. The algorithm incorporates a dual step-size strategy, adaptively selecting between a self-adaptive rule and an Armijo-like rule at each iteration.The proposed method effectively addresses limitations of existing extragradient methods, particularly the need for Lipschitz constants and the computational burden of multiple projections.


\end{document}